\numberwithin{equation}{section}
\newtheorem{theorem}{Theorem}
\numberwithin{theorem}{section}
\newtheorem*{theorem*}{Theorem} 
\newtheorem{proposition}[theorem]{Proposition}
\newtheorem{definition}[theorem]{Definition}
\newtheorem{lemma}[theorem]{Lemma}
\newtheorem{notation}[theorem]{Notation}
\newtheorem{remark}[theorem]{Remark}
\newtheorem{corollary}[theorem]{Corollary}
\newcommand{\subjclass}[2][1991]{%
  \let\@oldtitle\@title%
  \gdef\@title{\@oldtitle\footnotetext{#1 \emph{Mathematics subject classification:} #2}}%
}
\newcommand{\keywords}[1]{%
  \let\@@oldtitle\@title%
  \gdef\@title{\@@oldtitle\footnotetext{\emph{Key words and phrases.} #1.}}%
}
\begin{document}

\title{Pointwise and uniform bounds for functions of the Laplacian on non-compact symmetric spaces}

\renewcommand\Affilfont{\itshape\small}

\author[1]{Yulia Kuznetsova}
\affil[1]{Universit\'e de Franche Comt\'e, CNRS, LmB (UMR 6623), F-25000 Besançon, France}

\author[1,2]{Zhipeng Song}
\affil[2]{Ghent University, Department of Mathematics: Analysis, Logic and Discrete Mathematics, 9000 Ghent, Belgium}

\keywords{noncompact symmetric space; spherical functions; spherical multiplier}

{
\let\thefootnote\relax\footnotetext{\Letter\ Yulia Kuznetsova: yulia.kuznetsova@univ-fcomte.fr; Zhipeng Song: zhipeng.song@univ-fcomte.fr}
}


\subjclass[2020]{22E30; 42B15; 35L05; 43A85; 43A90} 
{
}

\maketitle

\abstract{Let $L$ be the distinguished Laplacian on the Iwasawa $AN$ group associated with a semisimple Lie group $G$. Assume $F$ is a Borel function on $\mathbb{R}^+$. We give a condition on $F$ such that the kernels of the functions $F(L)$ are uniformly bounded. This condition involves the decay of $F$ only and not its derivatives. 
By a known correspondence, this implies pointwise estimates for a wide range of functions of the Laplace-Beltrami operator on symmetric spaces. In particular, when $G$ is of real rank one and $F(x)={\rm e}^{it\sqrt x}\psi(\sqrt x)$, our bounds are sharp.}

\newcommand{\ad}{\operatorname{ad}} 
\newcommand{\Ad}{\operatorname{Ad}} 
\newcommand{\id}{\operatorname{id}}
\newcommand{\Id}{\operatorname{Id}} 
\newcommand{\supp}{\operatorname{supp}} 
\newcommand{\e}{{\rm e}} 
\newcommand{\real}{\operatorname{Re}} 
\newcommand{\imaginary}{\operatorname{Im}} 
\newcommand{\grad}{\operatorname{\text{grad}}} 
\newcommand{\divergence}{\operatorname{\text{div}}} 
\newcommand{\ch}{\operatorname{ch}} 
\newcommand{\tr}{\operatorname{tr}} 
\newcommand{\End}{\operatorname{End}} 
\newcommand{\Hom}{\operatorname{Hom}} 
\newcommand{\Der}{\operatorname{Der}} 
\newcommand{\Aut}{\operatorname{Aut}} 
\newcommand{\Int}{\operatorname{Int}} 
\newcommand{\linearspan}{\operatorname{span}} 
\newcommand{\rank}{\operatorname{rank}} 

\newcommand{\mfa}{\operatorname{\mathfrak{a}}}
\newcommand{\mfb}{\operatorname{\mathfrak{b}}}
\newcommand{\mfc}{\operatorname{\mathfrak{c}}}
\newcommand{\mfd}{\operatorname{\mathfrak{d}}}
\newcommand{\mfe}{\operatorname{\mathfrak{e}}}
\newcommand{\mff}{\operatorname{\mathfrak{f}}}
\newcommand{\mfg}{\operatorname{\mathfrak{g}}}
\newcommand{\mfh}{\operatorname{\mathfrak{h}}}
\newcommand{\mfi}{\operatorname{\mathfrak{i}}}
\newcommand{\mfj}{\operatorname{\mathfrak{j}}}
\newcommand{\mfk}{\operatorname{\mathfrak{k}}}
\newcommand{\mfl}{\operatorname{\mathfrak{l}}}
\newcommand{\mfm}{\operatorname{\mathfrak{m}}}
\newcommand{\mfn}{\operatorname{\mathfrak{n}}}
\newcommand{\mfo}{\operatorname{\mathfrak{o}}}
\newcommand{\mfp}{\operatorname{\mathfrak{p}}}
\newcommand{\mfq}{\operatorname{\mathfrak{q}}}
\newcommand{\mfr}{\operatorname{\mathfrak{r}}}
\newcommand{\mfs}{\operatorname{\mathfrak{s}}}
\newcommand{\mft}{\operatorname{\mathfrak{t}}}
\newcommand{\mfu}{\operatorname{\mathfrak{u}}}
\newcommand{\mfv}{\operatorname{\mathfrak{v}}}
\newcommand{\mfw}{\operatorname{\mathfrak{w}}}
\newcommand{\mfx}{\operatorname{\mathfrak{x}}}
\newcommand{\mfy}{\operatorname{\mathfrak{y}}}
\newcommand{\mfz}{\operatorname{\mathfrak{z}}}

\newcommand{\mfA}{\operatorname{\mathfrak{A}}}
\newcommand{\mfB}{\operatorname{\mathfrak{B}}}
\newcommand{\mfC}{\operatorname{\mathfrak{C}}}
\newcommand{\mfD}{\operatorname{\mathfrak{D}}}
\newcommand{\mfE}{\operatorname{\mathfrak{E}}}
\newcommand{\mfF}{\operatorname{\mathfrak{F}}}
\newcommand{\mfG}{\operatorname{\mathfrak{G}}}
\newcommand{\mfH}{\operatorname{\mathfrak{H}}}
\newcommand{\mfI}{\operatorname{\mathfrak{I}}}
\newcommand{\mfJ}{\operatorname{\mathfrak{J}}}
\newcommand{\mfK}{\operatorname{\mathfrak{K}}}
\newcommand{\mfL}{\operatorname{\mathfrak{L}}}
\newcommand{\mfM}{\operatorname{\mathfrak{M}}}
\newcommand{\mfN}{\operatorname{\mathfrak{N}}}
\newcommand{\mfO}{\operatorname{\mathfrak{O}}}
\newcommand{\mfP}{\operatorname{\mathfrak{P}}}
\newcommand{\mfQ}{\operatorname{\mathfrak{Q}}}
\newcommand{\mfR}{\operatorname{\mathfrak{R}}}
\newcommand{\mfS}{\operatorname{\mathfrak{S}}}
\newcommand{\mfT}{\operatorname{\mathfrak{T}}}
\newcommand{\mfU}{\operatorname{\mathfrak{U}}}
\newcommand{\mfV}{\operatorname{\mathfrak{V}}}
\newcommand{\mfW}{\operatorname{\mathfrak{W}}}
\newcommand{\mfX}{\operatorname{\mathfrak{X}}}
\newcommand{\mfY}{\operatorname{\mathfrak{Y}}}
\newcommand{\mfZ}{\operatorname{\mathfrak{Z}}}

\newcommand{\mA}{\operatorname{\mathscr{A}}}
\newcommand{\mB}{\operatorname{\mathscr{B}}}
\newcommand{\mC}{\operatorname{\mathscr{C}}}
\newcommand{\mD}{\operatorname{\mathscr{D}}}
\newcommand{\mE}{\operatorname{\mathscr{E}}}
\newcommand{\mF}{\operatorname{\mathscr{F}}}
\newcommand{\mG}{\operatorname{\mathscr{G}}}
\newcommand{\mH}{\operatorname{\mathscr{H}}}
\newcommand{\mI}{\operatorname{\mathscr{I}}}
\newcommand{\mJ}{\operatorname{\mathscr{J}}}
\newcommand{\mK}{\operatorname{\mathscr{K}}}
\newcommand{\mL}{\operatorname{\mathscr{L}}}
\newcommand{\mM}{\operatorname{\mathscr{M}}}
\newcommand{\mN}{\operatorname{\mathscr{N}}}
\newcommand{\mO}{\operatorname{\mathscr{O}}}
\newcommand{\mP}{\operatorname{\mathscr{P}}}
\newcommand{\mQ}{\operatorname{\mathscr{Q}}}
\newcommand{\mR}{\operatorname{\mathscr{R}}}
\newcommand{\mS}{\operatorname{\mathscr{S}}}
\newcommand{\mT}{\operatorname{\mathscr{T}}}
\newcommand{\mU}{\operatorname{\mathscr{U}}}
\newcommand{\mV}{\operatorname{\mathscr{V}}}
\newcommand{\mW}{\operatorname{\mathscr{W}}}
\newcommand{\mX}{\operatorname{\mathscr{X}}}
\newcommand{\mY}{\operatorname{\mathscr{Y}}}
\newcommand{\mZ}{\operatorname{\mathscr{Z}}}

\newcommand{\R}{\operatorname{\mathbb{R}}}
\newcommand{\C}{\operatorname{\mathbb{C}}}
\newcommand{\Z}{\operatorname{\mathbb{Z}}}
\newcommand{\N}{\operatorname{\mathbb{N}}}
\newcommand{\Q}{\operatorname{\mathbb{Q}}}
\newcommand{\F}{\operatorname{\mathbb{F}}}
\newcommand{\K}{\operatorname{\mathbb{K}}}
\newcommand{\X}{\operatorname{\mathbb{X}}}

\def\abso #1{ \left| #1 \right| } 
\def\set #1{ \left\{ #1 \right\} } 
\def\norm #1{ \left\| #1 \right\| } 
\def\bracket #1{ \left( #1 \right) } 
\def\Bracket #1{ \left[ #1 \right] } 
\def\ip #1{ \left\langle #1 \right\rangle } 

\newcommand{\HCc}{\operatorname{\mathbf{c}}} 
\newcommand{\IDO}{\operatorname{\mathbf{D}}}
\newcommand{\LB}{{F(\Delta_\rho)}}
\newcommand{\La}{{F(L)}} 
\newcommand{\HCb}{\operatorname{\mathbf{b}}} 
\newcommand{\CA}{{\text{Cl($A^+$)}}} 
\newcommand{\Ca}{{\text{Cl($\mfa^+$)}}} 
\newcommand{\wall}{{\text{wall($\mfa^+$)}}} 
\let\a\alpha


\section{Introduction}\label{intro}

An important part of the classical harmonic analysis is the study of multipliers. Let $k$ be a tempered distribution on $\R^n$ and $m=\mF k$ be its Fourier transform. The celebrated H\"omander-Mikhlin multiplier theorem \cite{hormander1960estimates} asserts that the convolution operator $T\cdot:=\cdot *k$ is bounded on $L^p(\R^n)$ $(1< p<\infty)$ if the symbol $m$ satisfies the Mikhlin type condition:
\[
\sup_{\xi\in \R^n}\abso{\xi}^{|\alpha|}\abso{\partial_\xi^\alpha m(\xi)}\le C< +\infty, \quad \forall |\alpha|\le \Bracket{n/2}+1.
\]


Much of this theory makes sense on Lie groups, if we restrict our attention to spherical multipliers. Let $G$ be a semisimple, connected, and noncompact Lie group. Assuming $G$ has a finite center, it has an Iwasawa decomposition $G=KAN$, and there exists the Harish-Chandra transform $\mH$, also called the spherical transform, taking $K$-biinvariant functions into functions on $\mfa_{\C}^*$. Here $\mfa_{\C}^*$ is the dual of complexification $\mfa_{\C}$ of the Lie algebra $\mfa$ of $A$. Let $k$ be a $K$-biinvariant tempered distribution on $G$ and $\mH k$ its Harish-Chandra transform. Then the operator $T\cdot=\cdot *k$ is a spherical multiplier on the symmetric space $S=G/K$, of kernel $k$ and symbol $m=\mH k$. 
We also refer reader to \cite{gangolli1988harmonic,helgason1962differential,helgason2022groups}.

The conditions on $m$ are however very different from the Euclidean case: by a famous result of Clerc and Stein \cite{clerc1974lp}, $m$ must be holomorphic and bounded in an open tube around the real part $\mfa^*$ of $\mfa_{\C}^*$, in order to make $T$ bounded on $L^p(S)$ for $1<p<2$. Conversely, if this condition is satisfied, then a Mikhlin-type bound completes the picture to give a sufficient condition. 
This was proved by Clerc and Stein for $G$ complex, Stanton and Tomas \cite{stanton1978expansions}  in the rank-one case, and finally by Anker~\cite{anker1990lp} in the case of a higher rank.

From the PDEs point of view, the most important class of multipliers is of those generated by the Laplace operator. We state the problem in the case of symmetric spaces directly. To the Riemannian structure on $S$, there corresponds canonically its Laplace--Beltrami operator $\Delta$. It is self-adjoint on $L^2(S)$. Consequently, by spectral theorem, every bounded Borel function $F$ on $\R$ defines a bounded operator on $L^2(S)$:
\[
F(\Delta)=\int_0^\infty F(\xi) dE(\xi),
\] 
where $E$ is the spectral measure of $\Delta$. Moreover, being left-invariant, $F(\Delta)$ acts as a convolution on the right with a kernel $k_{F(\Delta)}$ (a priori defined in the distributional sense). 
An interesting question is, under what conditions on $F$, the operator $F(\Delta)$ is also bounded from $L^p(S)$ to $L^q(S)$?
This requires estimating $L^p$ norms of the kernel, which often requires finding its pointwise bounds. 

Such pointwise bounds are known for a variety of functions $F$ and are obtained case by case. The heat kernel corresponding to $\exp(-t|\Delta|)$ is very well studied, see the book \cite{var-book} or a more recent survey \cite{anker-survey}; the latest results \cite{anker-heat-solutions} describe the asymptotics of solutions of the heat equation at large time. Moreover, bounds are available (most of them by Anker and coauthors) 
for:
 the resolvents $(z-\Delta)^{-s}$ \cite{anker1992sharp};
 the Poisson kernel corresponding to $\exp(-t\sqrt{|\Delta|})$ \cite{anker1992sharp};
 the Schr\"odinger kernel of the semigroup $\exp(it\Delta)$ \cite{anker-schr};
 oscillating functions of the type $(|\Delta|+q)^{-\tau} (|\Delta|+\tilde q)^{\,\sigma} \exp(it\sqrt{|\Delta|+q})$ with different exponents $\tau$ and~$\sigma$, see \cite{tataru,anker-hyperbolic,anker-damek} in rank one and recently \cite{hassani,anker-hongwei} in higher ranks.

In this paper we obtain pointwise estimates for $F(\Delta)$ with a very general function $F$, on spaces of arbitrary rank. More precisely, we get estimates for the shifted Laplacian $\Delta_\rho = -\Delta - |\rho|^2$, which has no spectral gap; $\rho$ is a fundamental linear functional on $\mfa$, see Section 2. Our main theorem (Corollary \ref{main-LB} in the text) reads:
\begin{theorem*}
    Let $F$ be a Borel function on $\R^+$ such that 
    \begin{equation}\label{integral-intro}
I_F =    \int_{\mfa^*} \abso{F(\abso{\lambda}^2)} (1+|\lambda|)^{7(n-l)+1} d\lambda < \infty.
    \end{equation}
    Then for every $x\in G$
    \[
    |k_{F(\Delta_\rho)}(x)| \le C I_F\, e^{-\rho(H(x))},
    \]
    where $C$ is a constant depending on $G$ only, and $H(x)$ is the radial part of $x$ in the Iwasawa decomposition.
\end{theorem*}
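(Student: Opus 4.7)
Since this statement is labelled as a corollary, I would obtain it by translating a uniform bound for the distinguished Laplacian $L$ on $AN$ through the standard unitary identification between $(G/K, \Delta_\rho)$ and $(AN, L)$. Concretely, the Iwasawa diffeomorphism $an\mapsto anK$ combined with the modular square root $\delta^{1/2}(an)= e^{\rho(\log a)}$ conjugates $-\Delta-|\rho|^2$ on $L^2(G/K)$ to $L$ on $L^2(AN)$, yielding the pointwise identity
\[
k_{F(L)}(x) \;=\; e^{\rho(H(x))}\, k_{F(\Delta_\rho)}(x),\qquad x\in G,
\]
with both kernels viewed as $K$-bi-invariant functions on $G$ (extended from $A^+$). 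Thus the corollary reduces to the uniform estimate $|k_{F(L)}(x)|\le C\,I_F$, which should appear as the main theorem of the paper immediately preceding this corollary.

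To prove that uniform bound (the real content) I would start from the spherical Plancherel inversion on $AN$,
\[
k_{F(L)}(x)\;=\; c_G\!\int_{\mfa^*} F(|\lambda|^2)\,\Phi_\lambda(x)\,|\HCc(\lambda)|^{-2}\,d\lambda,
\]
where $\Phi_\lambda(x)=e^{\rho(H(x))}\phi_\lambda(x)$ is the $\delta^{1/2}$-twisted spherical function and $\phi_\lambda$ the Harish-Chandra spherical function on $G/K$. Two ingredients must then be combined: the classical polynomial bound $|\HCc(\lambda)|^{-2}\lesssim (1+|\lambda|)^{n-l}$ on the Plancherel density, and a uniform-in-$x$ polynomial bound $\sup_{x\in AN}|\Phi_\lambda(x)|\lesssim (1+|\lambda|)^{6(n-l)+1}$, the two exponents combining to the $7(n-l)+1$ in the definition of $I_F$. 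Once these are in hand, the triangle inequality yields $|k_{F(L)}(x)|\le C\,I_F$, and the corollary follows by dividing by $e^{\rho(H(x))}$.

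The main obstacle is the second ingredient, the uniform bound on $\Phi_\lambda$. The naive inequality $|\phi_\lambda|\le\phi_0$ is insufficient, since after multiplication by $\delta^{1/2}$ it leaves a factor $(1+|H(x)|)^{n-l}$ that is unbounded in $x$. To eliminate that spatial growth at only a polynomial cost in $|\lambda|$, I would work from the integral representation $\phi_\lambda(x)=\int_K e^{(i\lambda-\rho)(H(xk))}\,dk$ and perform repeated integration by parts in $k$ against the oscillatory phase $k\mapsto \langle\lambda,H(xk)\rangle$. Each integration by parts trades a factor of $|H(x)|$, arising from derivatives of the Iwasawa projection $H$, for a factor of $|\lambda|^{-1}$, so that after of order $n-l$ rounds the spatial growth is consumed by the phase. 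Controlling the critical-point geometry of the phase on $K$ uniformly in $(x,\lambda)$, so that denominators coming from the integration by parts do not blow up, is the delicate technical step; it is what ultimately forces the exponent $7(n-l)+1$ in the integrability hypothesis on $F$.
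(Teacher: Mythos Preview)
Your reduction via $k_{F(L)}=e^{\rho(H(\cdot))}k_{F(\Delta_\rho)}$ is correct and is exactly what the paper does, as is your identification of the inversion formula and of the Plancherel bound $|\HCc(\lambda)|^{-2}\lesssim(1+|\lambda|)^{n-l}$. The genuine gap is your second ingredient: the estimate $\sup_x e^{\rho(H(x))}|\varphi_\lambda(x)|\lesssim (1+|\lambda|)^{6(n-l)+1}$ is \emph{false}, so no integration-by-parts scheme on the $K$-integral can produce it. Already in rank one the Harish-Chandra asymptotics give $e^{\rho(H)}\varphi_\lambda(\exp H)\approx 2\operatorname{Re}\bigl(\HCc(\lambda)e^{i\lambda H}\bigr)$ for large $H$, and since $\HCc$ has a pole at $0$ one can make this of order $|\lambda|^{-1}$ by choosing $H\sim|\lambda|^{-1}$; at $\lambda=0$ one has $e^{\rho(H)}\varphi_0(\exp H)\asymp(1+|H|)^d$, unbounded in $H$. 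In higher rank the same blow-up occurs whenever $\lambda$ approaches a Weyl wall. Thus $\Phi_\lambda$ cannot be controlled by a polynomial in $|\lambda|$ alone, and your heuristic that each integration by parts trades a factor of $|H(x)|$ for $|\lambda|^{-1}$ cannot be correct near singular $\lambda$.

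The paper avoids this by never isolating $\Phi_\lambda$ from the Plancherel density. One factor of $|\HCc(\lambda)|^{-1}$ is majorised by $(1+|\lambda|)^{(n-l)/2}$, but the second is kept coupled to $e^{\rho}|\varphi_\lambda|$ inside the integral, where its vanishing near the walls exactly compensates the blow-up of $\Phi_\lambda$. The key Lemma~4.1 bounds $\sup_a\int R(\lambda)\,e^{\rho(\log a)}|\varphi_\lambda(a)|\,|\HCc(\lambda)|^{-1}d\lambda$ by induction on the number of simple roots: the rank-one base case uses the Harish-Chandra series directly (yielding $e^{\rho}|\varphi_\lambda|\le C|\HCc(\lambda)|$ away from the origin), and the inductive step uses Harish-Chandra's asymptotic comparison of $\varphi_\lambda$ with the spherical functions $\theta_{s\lambda}$ on a maximal Levi $M_{1F}$, the arising factor $\HCc(s\lambda)/\HCc_F(s\lambda)$ having its numerator cancelled precisely by the retained $|\HCc(\lambda)|^{-1}$. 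The exponent $6(n-l)+1$ is not a stationary-phase count but the paper's main technical contribution (Section~3, Proposition~3.3): an explicit determination of the power of $(1+|\lambda|)$ in the error term of Harish-Chandra's wall-asymptotic theorem, obtained by reworking the proof in Gangolli--Varadarajan.
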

See Section 2 for definition of the dimensions $n$, $l$.

This is equivalent to Theorem \ref{uniform estimate} below. We show also that the upper limit of $e^\rho k_{F(\Delta_\rho)}$ at infinity is bounded by a similar integral with the power $n-l$ instead of $7(n-l)+1$.

It is clear that for $F$ positive, one cannot hope for a much better bound: it is sufficient to evaluate $k_{F(\Delta_\rho)}$ at identity. 
But for oscillating functions \eqref{integral-intro} can be close to optimal as well: we show in Theorem \ref{lower-rank1} that for $F(|\lambda|^2)=\e^{it|\lambda|}\psi(|\lambda|)$ and in rank one, the lower limit of $e^\rho k_{F(\Delta_\rho)}$ at infinity is never zero and is estimated from below by a similar integral. 
This generalizes results of~\cite{akylzhanov2022norms} obtained for hyperbolic spaces.

From known estimates 
of spherical functions,
it is easy in fact to obtain bounds for the kernel of the type $|k_{F(\Delta_\rho)}(x)| \lesssim (1+|H(x)|)^d e^{-\rho(H(x))}$, see discussion at the end of Section 4. Our result is that it is possible to remove the polynomial factor in $H(x)$.
To achieve this, we refine the asymptotic estimates of Harish-Chandra for spherical functions $\varphi_\lambda$. In the technical Section 3, we obtain explicit constants controlling the growth in $\lambda$ in these estimates.

A polynomial factor in $H(x)$ is often non-significant in the analysis of the Laplace-Beltrami operator. But it becomes essential in concern with the following operator $L$ which has attracted much attention. 
We can define it as $L = \delta^{-1/2}\tau \Delta_\rho \tau\delta^{1/2}$, where $\tau$ is the inversion: $(\tau f)(g) = f(g^{-1})$, and $\delta$ is the modular function of $S$. This makes $L$ act as a convolution on the left, and the kernel of $F(L)$ is linked to that of $F(\Delta_\rho)$ by $k_{F(L)} = \delta^{-1/2} k_{F(\Delta_\rho)}$. Importantly, $L$ can be written as the sum of squares $L=\sum X_j^2$ of vector fields generating the Lie algebra of $S$, in a full analogy with the Euclidean case; whereas $\Delta$ has a necessary linear term in addition, due to non-unimodularity of $S$.




It turns out that the theory of $L$ is very different from that of $\Delta$: Hebisch \cite{hebisch1993subalgebra} was first to prove that if $G$ is complex, then $F(L)$ is bounded on $L^p(S)$ for $1\le p<\infty$ as soon as  $F$ satisfies the Mikhlin condition. In other words, no holomorphy is needed. 
In 1994 Cowling et al.~\cite{cowling1994spectral} extended this result (with other constants replacing $\Bracket{n/2}+1$) to all real Lie groups; they gave a sufficient condition on $F$ such that $F(L)$ is bounded on $L^p$ for $1<p<\infty$ and is of weak type $(1,1)$.
Sikora \cite{sikora2002spectral} obtained a stronger optimal order of differentiability, which is $1/2$ smaller than the result of Cowling et al. Time-dependent bounds on $L^1$ for special oscillating functions were obtained by Gadzi\'nski \cite{gadzinski}.

We are interested in continuing this comparison. Thus, our Theorem \ref{uniform estimate} is stated in terms of $L$: the kernel $k_{F(L)}$ is uniformly bounded as soon as the integral \eqref{integral-intro} converges.

We show finally that in rank one, this estimate is best possible even for oscillating functions of the type $F(x) = \exp(it\sqrt x) \psi(\sqrt x)$: the uniform norm of the kernel $k_{F(L)}$ is bounded but does not decay at large time $t$. This generalizes the results of \cite{Muller2007axb, akylzhanov2022norms} for $ax+b$ groups (more precisely, their parts concerning uniform norms); we note also a related subsequent result of M\"uller and Vallarino \cite{Muller2010Damek-Ricci} on Damek-Ricci spaces, a class not covering all rank one symmetric spaces but containing many non-symmetric ones.



\medskip
{\bf Acknowledgements.} This work is supported by the EIPHI Graduate School (contract ANR-17-EURE-0002). The first author is also supported by the ANR-19-CE40-0002 grant of the French National Research Agency (ANR). The second author is also supported by the Methusalem programme of the Ghent University Special Research Fund (BOF), grant number 01M01021.

\section{Notations and preliminaries}

Our main interest is in semisimple groups, but the proofs involve results on reductive groups. This class, also called the class of Harish-Chandra, is defined differently from one author to another. We stick to the following definition of Gangolli and Varadarajan \cite{gangolli1988harmonic}, and term it class $\mathcal {H}$ accordingly:
\begin{definition}\label{class H}
A real Lie group $G$ with its Lie algebra $\mfg$ is in class $\mathcal{H}$ if
$\mfg$ is reductive;
     $G$ has only finitely many connected components;
     $\Ad (G) \subset \Int (\mfg_{\C})$;
    and the analytic subgroup of $G$ with Lie algebra $\Bracket{\mfg, \mfg}$ has finite center.
\end{definition}

Every semisimple, connected Lie group with finite center is in class $\mathcal H$.

\subsection{Iwasawa decomposition}\label{Iwasawa}
Let $G$ be a Lie group in class $\mathcal H$ with the Lie algebra $\mfg$.
Let $K$ be a maximal compact subgroup of $G$ and $\mfk$ the Lie algebra of $K$. We have the Cartan decomposition $\mfg = \mfk\oplus \mfp$ and the involution $\theta$ acting as 1 on $\mfk$ and -1 on $ \mfp$. 
Let $\mfa$ be a maximal abelian subspace of $\mfp$, this is, $\mfa$ is a Lie subalgebra of $\mfg$ such that $\mfa\subset \mfp$. 

We denote by $\mfa^*$ the real dual space of $\mfa$ and $\mfa_{\C}^*$ the dual of its complexification $\mfa_{\C}$. 

Let $\Sigma\subseteq  \mfa^* $ be the set of restricted roots of $(\mfg,\mfa)$. We have the restricted root space decomposition 
\begin{align}\label{decRoot}
	 \mfg= \mfg_0 \oplus \bigoplus_{\alpha\in \Sigma} \mfg_\alpha
\end{align}
where $ \mfg_\alpha=\{X\in  \mfg|(\ad H)X=\alpha(H)X,\ \forall H\in  \mfa\}$. Setting $\mfn=\bigoplus_{\alpha\in \Sigma^+}\mfg_{\alpha}$, we obtain the Iwasawa decomposition $\mfg= \mfk\oplus \mfa\oplus \mfn$ with $\mfa$ abelian and $\mfn$ nilpotent.

Denote by $m_\alpha=\dim\mfg_\alpha$ the multiplicity of $\alpha$. 
By choosing a lexicographic ordering of the roots we can define the set of positive roots $\Sigma^+$. Let $\Sigma_r^+$ be the set of reduced (also called short or indivisible) roots, that is, roots $\alpha\in \Sigma $ for which $\frac{1}{2}\alpha$ is not a root. 
Let $\Sigma_s^+\subset \Sigma^+$ be the set of simple roots. 
Note that $\Sigma_s^+ \subseteq \Sigma_r^+ \subseteq \Sigma^+ \subseteq \Sigma$; if $G$ is semisimple, then $\Sigma_s^+$ is a basis of $\mfa^*$.

Let $A$ and $N$ be the analytic subgroups of $G$ with the Lie algebras $ \mfa$ and $ \mfn$. 
On the Lie group level, we get the Iwasawa decomposition $G=KAN$, and the multiplication map $K \times A \times N \rightarrow G$ given by $(k, a, n)\mapsto kan$ is a diffeomorphism onto. 
Similarly, we also have the decomposition $G=NAK$. 
Let $H(g)$ denote the unique $\mfa$-component of $g\in G$ in the decomposition $g=k\exp(H(g))n$ with $k\in K, n\in N$;
and let $A(g)$ denote the unique $\mfa$-component in the decomposition $g=n\exp(A(g))k$.

The Killing form $B$ on $\mfg$ is positive definite on $\mfa$. For every $\lambda\in  \mfa^*$, there exists $H_\lambda\in  \mfa$ such that $\lambda(H)=\ip{ H_\lambda, H}$ for all $H\in \mfa$; this defines an inner product on $ \mfa^*$,
\[
\ip{\lambda, \mu}:=\ip{ H_\lambda, H_\mu },\quad \lambda, \mu \in \mfa^*.
\]
We denote in the sequel by $|\lambda|$ the corresponding norm of $\lambda\in \mfa^*$.

Let $\mfw$ be the Weyl group of the pair $(G, A)$. 
For any root $\alpha\in \Sigma$, the operator
\[
s_\alpha (\varphi)= \varphi-\frac{2\ip{\varphi, \alpha}}{\abso{\alpha}^2}\alpha, \ \varphi\in \mfa^*.
\]
is a reflection in the hyperplane $\sigma_\alpha=\set{\varphi\in \mfa^*|\ip{\varphi, \alpha}=0}$ and carries $\Sigma$ to itself.

Set 
\[\mfa_{\C}'=\set{H\in \mfa_{\C}\vert \alpha(H)\neq 0\ \text{for all} \ \alpha\in \Sigma}
\]
and $\mfa'=\mfa \cap \mfa_{\C}'$. 
We define the positive Weyl chamber of $\mfa$ associated with $\Sigma^+$ as 
\[
\mfa^+=\set{H\in \mfa \vert \alpha(H)> 0\ \text{for all} \ \alpha\in \Sigma^+}.
\]
We transfer this definition to $\mfa_{\C}^*$: 
\[
{\mfa_{\C}^{*}}'=\set{\mu\in \mfa_{\C}^*\vert \ip{\alpha, \mu}\neq 0\ \text{for all} \ \alpha\in \Sigma}, \quad {\mfa^{*}}'=\mfa^* \cap {\mfa_{\C}^{*}}'.
\]
Vectors in ${\mfa_{\C}^{*}}'$ are said to be regular.

We define the Weyl vector by
\[
\rho(H)=\frac{1}{2}\text{tr}(\ad H|_{\mfn}) =\frac{1}{2}\sum_{\alpha\in \Sigma^+}m_\alpha \alpha(H)
\]
for $H\in  \mfa$. 
Denote by $\Ca=\set{H\in \mfa: \alpha(H)\ge 0 \ \text{for all} \ \alpha\in \Sigma^+}$ the closure of $\mfa^+$ in $\mfa$. The polar decomposition of $G$ is given by $G=K\CA K$ where $\CA=\exp(\Ca)$, and we denote $x^+$ the $\CA$-component of $x\in G$ in this decomposition.
\subsection{Spherical functions} \label{spherical}

Harmonic analysis on symmetric spaces is built upon spherical functions.
It is important to note that our main references, Helgason \cite{helgason1962differential,helgason2022groups} and Gangolli and Varadarajan \cite{gangolli1988harmonic}, have different conventions in their notations. We chose to adopt Helgason's notations \cite{helgason2022groups} in our work; translation from \cite{gangolli1988harmonic} to \cite{helgason2022groups} is done by $\lambda \mapsto i\lambda$.

For each $\lambda\in \mfa_{\C}^*$, let $\varphi_\lambda(x)$ be the elementary spherical function given by 
\[
\varphi_\lambda(x)= \int_K e^{(i\lambda-\rho)(H(xk))} dk \quad \forall x\in G.
\]
Similarly, under the decomposition $G=NAK$ we set
\[
\varphi_\lambda(x)= \int_K e^{(i\lambda+\rho)(A(kx))} dk \quad \forall x\in G.
\]
These are exactly the joint eigenfunctions of all invariant differential operators on $G/K$.
We have the following functional equation: $\varphi_\lambda= \varphi_{s\lambda}$ for all $\lambda\in \mfa_{\C}^*, s\in \mfw$.

More information on spherical functions is given by the Harish-Chandra $\HCc$-function. An explicit expression for it as a meromorphic function on $\mfa_{\C}^*$ is given by the Gindikin-Karpelevich formula: For each $\lambda\in \mfa_{\C}^*$, 
\[
\HCc(\lambda) = c_0 \prod_{\alpha\in \Sigma_r^+}
\frac{ 2^{-i\ip{ \lambda, \alpha_0}} \Gamma\big(i\ip{ \lambda, \alpha_0}\big)}{
\Gamma\big(\frac{1}{2} (\frac{1}{2}m_\alpha +1+ i\ip{ \lambda, \alpha_0})\big)
\Gamma\big(\frac{1}{2}(\frac{1}{2}m_\alpha +m_{2\alpha}+ i\ip{ \lambda, \alpha_0})\big)},
\]
where $\alpha_0=\alpha/\ip{ \alpha, \alpha }$, $\Gamma$ is the classical $\Gamma$-function, and $c_0$ is a constant given by $\HCc(-i\rho)=1$. 
From the formula above, one can deduce for $\lambda \in \mfa^*$: 
\[
|\HCc(\lambda)|^{-2} \le C 
\begin{cases}
|\lambda|^{\nu-l} & \text{if}\ |\lambda|\le 1,\\
|\lambda|^{n-l} & \text{if}\ |\lambda|> 1.
\end{cases}
\]
where $n:=\dim G/K$, $l:=\dim A$, $d$ is denoted by the cardinality of $\Sigma_r^+$, and $\nu:=2d+l$ which is called the 'pseudo-dimension'. 
This implies a less precise, but shorter estimate
\begin{equation}\label{c-function}
\abso{\HCc(\lambda)}^{-1}\le 
C (1+|\lambda|)^{\frac{1}{2}(n-l)}.
\end{equation}
The other two properties of $\HCc$ we will use are that for $\lambda\in \mfa^*$ and $s\in \mfw$
\begin{equation}\label{c-conjugate}
|\HCc(s\lambda)|=|\HCc(\lambda)|,\quad \HCc(-\lambda)=\overline{\HCc(\lambda)}.
\end{equation}

Let now $\Sigma_s^+=\set{\alpha_1,\cdots, \alpha_r}$ be the simple system in $\Sigma^+$, and $\Lambda=\Z^+\alpha_1+\cdots+\Z^+ \alpha_r$ the set of all linear combinations $n_1\alpha_1 + \cdots + n_r\alpha_r$ $(n_i\in \Z^+)$. Set $\Lambda^+=\Lambda\setminus \{0\}$ and $\tilde\Lambda=\Z\Lambda=\Z\alpha_1+\cdots+\Z \alpha_r$. 
For any $\mu\in \tilde\Lambda$ and $s, t\in \mfw$ $(s\neq t)$ we define the following hyperplanes in $\mfa_{\C}^*$:
\[
\sigma_\mu=\set{\lambda\in\mfa_{\C}^*: \ip{\mu, \mu}=2i\ip{\mu,\lambda}},
\]
\[
\tau_\mu(s,t)=\set{\lambda\in \mfa_{\C}^*:i(s\lambda-t\lambda)=\mu}.
\]
Set $\sigma=\bigcup_{\mu\in \Lambda^+}\sigma_\mu$ and $\sigma^c=\mfa_{\C}^*\backslash \sigma$.
For $\mu=n_1\alpha_1 + \cdots + n_r\alpha_r \in \Lambda$, we denote by 
\[
m(\mu)=n_1 + \cdots + n_r
\]
the level of $\mu$.

If $\lambda\in \mfa_{\C}^*$ does not lie in any of the hyperplanes $\sigma_\mu$ or $\tau_\nu(s,t)$, then $\varphi_\lambda$ is decomposed into the Harish-Chandra series
\[
\varphi_\lambda(\exp H)= \sum_{s\in \mfw}\HCc(s \lambda)\e^{(is\lambda-\rho)(H)}\sum_{\mu\in \Lambda}\Gamma_\mu(s\lambda)\e^{-\mu(H)}, \quad  H\in  \mfa^+,
\]
where $\Gamma_\mu$ are coefficient functions on $\mfa^*$ determined by the recurrent relation
\[
\bracket{\ip{ \mu, \mu }-2i\ip{\mu, \lambda}}\Gamma_\mu(\lambda)= 2\sum_{\alpha\in \Sigma^+}m_\alpha\sum_{\substack{ k\ge 1,\\ \mu-2k\alpha\in \Lambda}}\Gamma_{\mu-2k\alpha}(\lambda)\bracket{\ip{ \mu+\rho-2k\alpha,\alpha}-i\ip{ \alpha, \lambda}},
\]
with initial function $\Gamma_0\equiv 1$. 

Let $C(G//K)$ denote the space of continuous functions on $G$ which are bi-invariant under $K$. 
Let $C_c(G//K)$ be the set of all functions in $C(G//K)$ with compact support. 
The spherical transform, called also the Harish-Chandra transform, of $f\in C_c(G//K)$ is defined by
\[
(\mH f)(\lambda)=\int_G f(x)\varphi_{-\lambda}(x)d x, \quad \lambda\in  \mfa_{\C}^*.
\]
The Harish-Chandra inversion formula is given by
\[
f(x)=C\int_{ \mfa^*}(\mH f)(\lambda)\varphi_\lambda(x) \abso{\HCc (\lambda)}^{-2}d\lambda, \quad x\in G
\]
where $C$ is a constant associated with $G$. 

\subsection{Symmetric space}\label{symmetric space}
When $G$ is a semisimple, connected Lie group with finite center, the homogeneous space $G/K$ can be identified with the solvable Lie group $S=AN$ with the Lie algebra $\mfa\oplus \mfn$. 
 The multiplication and inverse for $(a,x),\, (b,y)\in S$ are given by
 \[
 (a,x)\,(b,y)=(ab,bxb^{-1}y) 
 \]
 and 
 \[
 (a,x)^{-1}=(a^{-1}, ax^{-1} a^{-1}), 
 \]
which is well defined as $\mfn$ is stable under $\ad(\mfa)$.

The group $G$ is unimodular, while $S$ is not (unless $G=A$) and has the modular function given by 
\[
\delta(s)=\e^{-2\rho(\log a)}, \ \ s= (a,x)\in S.
\]
It admits an extension to $G$ defined by $\delta(g):=\e^{-2\rho(H(g))}(g\in G)$.
The left and right Haar measures of $S$ are given by
\[
d_ls=d a d n,
\]
\[
d_rs=\delta^{-1}(s) d_ls=\e^{2\rho(\log a)}d a d n.
\]

\subsection{Distinguished Laplacian and Laplace-Beltrami operator}\label{La-LB}
Keep the assumptions of Section \ref{symmetric space}.
The Killing form $B$ of $G$ is positive definite on $\mfp$. This defines a Riemannian structure on $G/K$ and the Laplace--Beltrami operator $\Delta$. It has also a coordinate description as described below.

The bilinear form $B_\theta: (X,Y)\mapsto -B(X,\theta Y)$ is positive definite on $\mfg$ and the Iwasawa decomposition $\mfg=\mfk\oplus\mfa\oplus\mfn$ is orthogonal with respect to it. We choose and orthonormal basis $(H_1,\dots,H_l,X_1,\dots,X_{\dim \mfg})$ in $\mfg$ so that $(H_1,\dots,H_l)$ is a basis of $\mfa$ and $(X_1,\dots,X_{n-l})$ a basis of $\mfn$. With these viewed as left-invariant vector fields on $\mfg$, we have then \cite{bougerol1983exemples}
$$
\Delta = \sum_{i=1}^{l} H_i^2+2\sum_{j=1}^{n-l} X_j^2+\sum_{i=1}^l H_i \cdot (H_i\delta)(e).
$$
For a function $f$ on $G$, set $\tau (f) = \check f$ and $\check f(s):=f(s^{-1})$. Setting $-\widetilde X = \tau \circ X \circ \tau$ for $X\in\mfg$, we get a right-invariant vector field; the operator $L$ defined by
$$
-L = \sum_{i=1}^{l}\widetilde H_i^2+2\sum_{j=1}^{n-l} \widetilde X_j^2
$$
is a distinguished Laplace operator on $S=G/K$ and 
has a special relationship with $\Delta$\cite{cowling1994spectral} :
\[
\delta^{-1/2}\circ\tau L \tau\circ\delta^{1/2} = -\Delta - \abso{\rho}^2=: \Delta_\rho.
\]
The shifted operator $\Delta_\rho$ has the spectrum $[0, \infty)$. Both $\Delta_\rho$ and $L$ extend to positive and self-adjoint operators on $L^2(S, d_l)$, with the same notations for their extensions.

Let $F$ be a bounded Borel function on $[0, \infty)$. 
We can define bounded operators $F(L)$ and $F(\Delta_\rho)$ on $L^2(S)$ via Borel functional calculus.  
Note that $L$ is right-invariant while $\Delta_\rho$ is left-invariant, we can define $k_\LB$ and $k_\La$ to be the kernels of $F(\Delta_\rho)$ and $F(L)$ respectively, a priori distributions, such that 
\begin{equation}\label{kernel of functions}
    \begin{split}
        F(\Delta_\rho)f=f *k_\LB \quad  
        \text{and}\quad  
        F(L)f=k_\La*f, \quad f\in L^2(S).
    \end{split}
\end{equation}
The connection between $L$ and $\Delta$ implies that
\begin{equation}\label{relation of kernels}
	k_\LB=\delta^{1/2} k_\La,
\end{equation}
which was pointed out by Giulini and Mauceri \cite{giulini1993analysis} (also see \cite{bougerol1983exemples}).
It is known that 
$k_\LB$ is $K$-biinvariant. Thus, not only $k_\LB$ but $k_\La$ can be regarded as a function on $G$ by the formula above.
By inverse formula for the spherical transform, we obtain the exact formula of the kernel as follows:
\[
k_\LB(x)=C\int_{ \mfa^*}F(|\lambda|^2)\varphi_\lambda(x) \abso{\HCc (\lambda)}^{-2}d\lambda, \quad x\in G.
\]

\section{Asymptotic behavior of $\varphi_\lambda$}\label{Asymptotic behavior}
In this section, we shall introduce some notations and properties about the asymptotics of the elementary spherical functions. The main result is by Harish-Chandra \cite{HARISHCHANDRA1975104}
, but we are systematically citing the book of Gangolli and Varadarajan \cite{gangolli1988harmonic}.

In this section, $G$ is a group in class $\mathcal{H}$. We use all the notations introduced above.

Fix $H_0\in \Ca$ which is not in the center of $\mfg$. This is equivalent to say that the subset $F\subset \Sigma_s^+$ of simple roots vanishing at $H_0$ is not equal to $\Sigma_s^+$. Set $\mfa_F = \{ H\in \mfa: \a(H)=0\ \text{for all}\ \a\in F\}$, and let $\mfm_{1F}$ be the centralizer of $\mfa_F$ in $\mfg$. If we denote by $M_{1F}$ the centralizer of $\mfa_F$ in $G$, then $\mfm_{1F}$ is corresponding Lie algebra of $M_{1F}$.

Denote by $\Sigma_F^+ \subset \Sigma^+$ the set of positive roots vanishing at $H_0$ and $\Sigma_F:=\Sigma_F^+\cup -\Sigma_F^+$. Then $F$ and $\Sigma_F^+$ are the simple and positive systems of the pair $(\mfm_{1F}, \mfa)$ respectively.
With $\mfm=Z_{\mfk}(\mfa)$, we have the restricted root space decomposition
\[
\mfm_{1F}=\mfm\oplus\mfa\oplus \bigoplus_{\lambda\in \Sigma_F}\mfg_\lambda,
\]
where as expected 
\[
\mfg_\lambda = \set{X\in \mfm_{1F}: \Bracket{H,X}=\lambda(H)X\ \text{for all}\ H\in \mfa}
\] 
for each $\lambda\in \Sigma_F$. 

Let $\mfk_{F}=\mfm_{1F}\cap \mfk$ 
and $\mfn_{1F}=\mfm_{1F}\cap \mfn=\bigoplus_{\lambda\in \Sigma_F^+}\mfg_\lambda$. 
Then we have 
the Iwasawa decomposition $\mfm_{1F}=\mfk_F\oplus\mfa\oplus\mfn_{1F}$. 
Let $K_F=K\cap M_{1F}$ and $N_{1F}=\exp \mfn_{1F}$, on Lie group level, we also have the decomposition 
$M_{1F}=K_F A N_{1F}$.
We define $\theta_F=\theta|_{M_{1F}}$ and $B_F=B|_{\bracket{\mfm_{1F}}_{\C}\times\bracket{\mfm_{1F}}_{\C}}$, then $\theta_F$ is a Cartan involution of $M_{1F}$. It is important that $M_{1F}$ is also of class $\mathcal{H}$ and $(M_{1F},K_F,\theta_F,B_F)$ inherit all the properties of $(G,K,\theta,B)$\cite[Theorem 2.3.2]{gangolli1988harmonic}.

We denote the Weyl group of pair $(\mfm_{1F},\mfa)$ by $\mfw_F$ and define $\tau_F(H)=\min_{\alpha\in \Sigma^+\backslash \Sigma_F^+}\abso{\alpha(H)}$ for all $H\in \mfa$. 
Let $\HCc_F$ denote the Harish-Chandra $\HCc$-function with respect to $M_{1F}$. 
The elementary spherical functions on $M_{1F}$ corresponding to $\lambda\in \mfa_{\C}^*$ are given by
\[
\theta_\lambda(m)=\int_{K_F}\e^{(i\lambda-\rho_F)(H(mk_F))}dk_F,
\]
where $\rho_F:=\frac{1}{2}\sum_{\alpha\in \Sigma_F^+}m_\alpha \alpha$. 

An important part of our proofs relies on Theorem \ref{thm593} below, which is \cite[Theorem 5.9.3 (b)]{gangolli1988harmonic}. 
To state it, we need the following result. It is a proposition in \cite{gangolli1988harmonic} since the functions $\psi_\lambda$ on $M_{1F}$ are defined there by another formula \cite[(5.8.23)]{gangolli1988harmonic}; we take it however for a definition.

\begin{notation}[\text{\cite[Proposition 5.8.6]{gangolli1988harmonic}}]\label{thm586}
	Let $\lambda\in \mfa^*$ be regular. Then 
	\[
	\psi_\lambda(m)=\abso{\mfw_F}^{-1} \sum_{s\in \mfw} (\HCc(s\lambda)/\HCc_F(s\lambda))\theta_{s\lambda}(m)
	\]
	for all $m\in M_{1F}$.
\end{notation}

Next, we introduce the following theorem about the asymptotic behavior of $\varphi_\lambda$ in a region including a wall of the positive chamber. 
For any $\zeta> 0$, define 
\begin{equation}\label{conical-region}
	A^+(H_0:\zeta):=\set{a\in\CA : \tau_F(\log a) > \zeta|\log a| }.
\end{equation}
This is a conical open set in $\CA$, and it contains $\exp H_0$ when $\zeta$ is small enough.

\begin{theorem}[\text{\cite[Theorem 5.9.3 (b)]{gangolli1988harmonic}}]\label{thm593}
Fix $\zeta >0$. Then we can find constants $\kappa=\kappa(\zeta)>0$, 
$C=C(\zeta)>0$, and $\iota=\iota(\zeta)>0$ such that for all $\lambda\in \mfa^*$ and all $a\in A^+(H_0: \zeta)$,
\[
\big|\e^{\rho(\log a)}\varphi_\lambda(a)-\e^{\rho_F(\log a)}\psi_\lambda(a)\big| 
\le C(1+\abso{\lambda})^\iota\e^{-\kappa \abso{\log a}}.
\]
\end{theorem}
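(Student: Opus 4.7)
The plan is to prove the estimate by carefully comparing the Harish-Chandra series expansion of $\varphi_\lambda$ on $G$ with that of $\theta_\mu$ on the reductive subgroup $M_{1F}$ (of class $\mathcal{H}$), and exploiting the geometry of $A^+(H_0:\zeta)$ to absorb the "off-subsystem" terms into exponential decay. Concretely, for $\lambda\in\mfa^*$ regular and $H\in\mfa^+$, start from
\[
e^{\rho(H)}\varphi_\lambda(\exp H) = \sum_{s\in\mfw} \HCc(s\lambda)\,e^{is\lambda(H)} \sum_{\nu\in\Lambda}\Gamma_\nu(s\lambda)\,e^{-\nu(H)},
\]
and split the inner sum into $\nu\in\Lambda_F$ (non-negative integer combinations of roots in $F$) and $\nu\in\Lambda\setminus\Lambda_F$.

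The first key observation is that the recurrence for $\Gamma_\nu$ is self-contained on $\Lambda_F$: if $\nu\in\Lambda_F$, then $\nu-2k\alpha\in\Lambda_F$ forces $\alpha\in\Sigma_F^+$, so the recursion uses only $\Sigma_F$-roots and produces the same coefficients as the Harish-Chandra expansion on $M_{1F}$. Thus the $\Lambda_F$-part of the series equals
\[
\sum_{s\in\mfw}\HCc(s\lambda)\,e^{is\lambda(H)}\sum_{\nu\in\Lambda_F}\Gamma^F_\nu(s\lambda)\,e^{-\nu(H)}.
\]
Reorganizing the outer sum over $\mfw$ by right cosets of $\mfw_F$, using that $\theta_\mu$ and $\Phi^F_\mu$ are $\mfw_F$-invariant, and inserting the defining formula of $\psi_\lambda$ from Notation~\ref{thm586}, one matches this expression precisely with $e^{\rho_F(H)}\psi_\lambda(\exp H)$.

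For the remainder part with $\nu\in\Lambda\setminus\Lambda_F$, some simple root $\alpha_j\notin F$ appears with positive coefficient in $\nu$, so $\nu(H)\ge \alpha_j(H)\ge\tau_F(H)>\zeta|H|$ throughout $A^+(H_0:\zeta)$. Writing $e^{-\nu(H)}\le e^{-\zeta|H|/2}\,e^{-\nu(H)/2}$ pulls out the required exponential decay, and the residual series $\sum_\nu |\Gamma_\nu(s\lambda)|e^{-\nu(H)/2}$ must be shown to be at most polynomial in $|\lambda|$. This is done via the standard induction on the level $m(\nu)$ using the recursion for $\Gamma_\nu$: for $\lambda\in\mfa^*$ real, the denominator $\langle\nu,\nu\rangle-2i\langle\nu,\lambda\rangle$ stays bounded below by $\langle\nu,\nu\rangle$, and one obtains $|\Gamma_\nu(s\lambda)|\le C(1+|\lambda|)^{m(\nu)}$ with explicit constants, after which convergence of $\sum_\nu(1+|\lambda|)^{m(\nu)}e^{-\nu(H)/2}$ on a neighbourhood of $H_0$ (or after scaling $H$) yields a bound of the form $C(1+|\lambda|)^\iota e^{-\kappa|H|}$.

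The main obstacle is the uniform control of the coefficients $\Gamma_\nu(s\lambda)$: the recursion has small denominators near the singular hyperplanes $\sigma_\mu$, which is harmless for $\lambda\in\mfa^*$ purely real but requires careful induction to track how the exponent $\iota=\iota(\zeta)$ depends on the number of recursion layers needed before the factor $e^{-\zeta|H|/2}$ dominates. A secondary subtlety is the $\mfw$ versus $\mfw_F$ bookkeeping in the identification of the main term with $e^{\rho_F(H)}\psi_\lambda(\exp H)$, which ultimately reduces to the compatibility of the $\HCc$/$\HCc_F$ ratio under Weyl action built into the definition of $\psi_\lambda$.
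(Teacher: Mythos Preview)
Your approach has a genuine gap that cannot be repaired without changing strategy. The Harish-Chandra series
\[
e^{\rho(H)}\varphi_\lambda(\exp H)=\sum_{s\in\mfw}\HCc(s\lambda)\,e^{is\lambda(H)}\sum_{\nu\in\Lambda}\Gamma_\nu(s\lambda)\,e^{-\nu(H)}
\]
is valid only for $H$ in the \emph{open} chamber $\mfa^+$, and its convergence deteriorates as $H$ approaches any wall. But the region $A^+(H_0:\zeta)$ is defined in $\CA$ and, by construction of $F$, contains the whole ray $\{\exp(tH_0):t>0\}$ along which $\alpha_i(H)=0$ for every $\alpha_i\in F$. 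Your remainder series $\sum_{\nu\in\Lambda\setminus\Lambda_F}|\Gamma_\nu(s\lambda)|\,e^{-\nu(H)}$ then diverges: take $\nu=\alpha_j+N\alpha_i$ with $\alpha_j\notin F$, $\alpha_i\in F$; then $\nu(H)=\alpha_j(H)$ is independent of $N$, and summing over $N$ blows up. The trick $e^{-\nu(H)}\le e^{-\zeta|H|/2}e^{-\nu(H)/2}$ does not help, since the residual series still diverges for the same reason. Neither does the bound $|\Gamma_\nu(\lambda)|\le C(1+|\lambda|)^{m(\nu)}$, which is anyway far too crude (and would not give a fixed exponent $\iota$); even the sharp uniform bound $|\Gamma_\nu(\lambda)|\le C\,m(\nu)^p$ on $\mfa^*$ cannot rescue a divergent series. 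In short, the whole difficulty of Theorem~\ref{thm593} is precisely to control $\varphi_\lambda$ \emph{up to and on} the $F$-walls, where the $G$-series is unavailable.

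For comparison: the paper does not prove Theorem~\ref{thm593} itself but quotes it from \cite{gangolli1988harmonic}; the paper's own contribution (Proposition~\ref{choice-of-s}) is to trace through that proof and extract an explicit value of $\iota$. The proof in \cite{gangolli1988harmonic} does \emph{not} manipulate the Harish-Chandra series of $\varphi_\lambda$ on $G$. Instead it packages $\varphi_\lambda$ into a vector-valued function $\Phi_0$ satisfying a first-order system on $A$ whose coefficient matrix has an exponentially small perturbation of a constant matrix $\Gamma_0(\lambda:H)$; the asymptotics then come from comparing solutions of the perturbed and unperturbed systems (Propositions~5.7.4 and~5.8.3 there). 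This ODE route is what makes the estimate uniform across the $F$-walls. Your identification of the ``$\Lambda_F$-part'' with $e^{\rho_F(H)}\psi_\lambda(\exp H)$ is correct on the open chamber (it uses that $\langle\rho-\rho_F,\alpha\rangle=0$ for $\alpha\in\Sigma_F$ and that $\HCc/\HCc_F$ is $\mfw_F$-invariant), but it is only a heuristic for why the theorem should be true; it does not yield the required uniform remainder estimate.
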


\subsection{Estimates of the constants in Theorem \ref{thm593}}

We are interested in explicit values of the constants in the theorem above, and especially of $\iota$. Theorem \ref{thm593} as stated is in fact a particular case of \cite[Theorem 5.9.3 (b)]{gangolli1988harmonic}: the latter contains estimates for a differential operator $b$ acting on $\varphi_\lambda$, and we apply it in Section \ref{sec-main} with $b=\id$. We would like to estimate the constant $\iota$, which is denoted by $s$ in the book \cite{gangolli1988harmonic}, in the case of a general operator $b$, since the proof is almost the same.

Extracting constant from the proofs of \cite[Theorem 5.9.3 (b)]{gangolli1988harmonic} and its preceding statements is a long task requiring a lot of citations. 
It is impossible to explain every symbol appearing in the discussion and to keep a reasonable volume, so we invite the reader to consult the book \cite{gangolli1988harmonic} in parallel. Triple-numbered citations like Proposition 5.8.3 or (5.9.2) will be always from this book.  Any new notations introduced in this subsection will be used only within it.

First to mention are 
 the notations concerning parabolic subgroups. Those introduced above suit our proofs in Lemma \ref{main lemma}, but are different from \cite{gangolli1988harmonic}. In this subsection we denote, following the book \cite{gangolli1988harmonic}: $M_{10}=M_{1F}$, $\mfm_{10}=\mfm_{1F}$, $\mfw_0=\mfw_F$, $\tau_0=\tau_F$ and $\rho_0=\rho_F$.

Let $\mfs_{10}$ denote the algebra $\mfm_{10} \cap \mfp$ (we note that in \cite{gangolli1988harmonic}, the complement of $\mfk$ in the Cartan decomposition is denoted by $\mfs$ rather than $\mfp$).
Denote by $U(\mfg)$ and $U(\mfs_{10})$ the universal enveloping algebras of $\mfg$ and $\mfs_{10}$ respectively. There is a projection $\beta_0$ of $U(\mfg)$ onto $U(\mfs_{10})$ defined in (5.3.32) and (5.3.33).
Next, a map $\gamma_0$ is defined in (5.9.2) by
$$
\gamma_0(b) = d_{P_0} \circ \beta_0(b) \circ d_{P_0}^{-1},\quad b\in U(\mfg)
$$
where $d_{P_0}$ is a homomorphism from $M_{10}$ to $\R_+$ defined on p. 208 referring to the formula (2.4.8). For $a\in A$, we have $d_{P_0}(a) = e^{(\rho-\rho_0)(\log a)}$ by Lemma 5.6.1. 

The space $\mfa^*_{\C}$ is denoted by ${\mathcal F}$ in \cite[\S 3.1]{gangolli1988harmonic}, and is decomposed as $\mathcal{F} = \mathcal{F}_R\oplus \mathcal{F}_I$ with $\mathcal{F}_R=\mfa^*$ and $\mathcal{F}_I = i\mfa^*$. For $\lambda\in\mathcal{F}$ this gives $\lambda=\lambda_{\bf R}+\lambda_{\bf I}$.
For any $\kappa>0$, the set ${\mathcal F}_I(\kappa)$ is defined in Lemma 4.3.3 as
$$
{\mathcal F}_I(\kappa) = \{ \lambda\in\mathcal{F}: |\lambda_{\bf R}| <\kappa \}.
$$
In particular, it contains $\mathcal{F}_I$. 

It is important to recall again that a spherical function denoted by $\varphi_\lambda$ in \cite{helgason1979differential} would be denoted $\varphi_{i\lambda}$ in \cite{gangolli1988harmonic}, so that $\varphi_\lambda$ of \cite{gangolli1988harmonic} is positive definite when $\lambda\in {\mathcal F}_I$ (Proposition 3.1.4), and the integral in the inversion formula of the spherical transform is over ${\mathcal F}_I$ as well (Theorem 6.4.1). Thus, in Proposition \ref{choice-of-s} below we consider the case $\lambda\in {\mathcal F}_I$ which implies $\lambda_{\bf R}=0$; in Corollary \ref{choice-of-iota}, we return to the notations of the rest of the paper.

We can now state the theorem:

\medskip
\noindent {\bf Theorem 3.2$'$} \cite[Theorem 5.9.3 (b)]{gangolli1988harmonic}\label{thm593'}
{\it Fix $\zeta >0$. Then we can find constants $\kappa=\kappa(\zeta)>0$, and, for each $b\in U(\mfg)$, constants 
$C=C(\zeta,b)>0$, $s=s(\zeta,b)>0$ such that for all $\lambda\in {\mathcal F}_I(\kappa)$ and all $a\in A^+(H_0: \zeta)$,
\begin{equation}\label{593-ineq}
\big|\e^{\rho(\log a)}(b\varphi_\lambda)(a)-\e^{\rho_0(\log a)} (\gamma_0(b) \psi_\lambda)(a)\big| 
\le C(1+\abso{\lambda})^s\e^{-\kappa \abso{\log a}}.
\end{equation}
}

Our result is now:

\begin{proposition}\label{choice-of-s}
For regular $\lambda\in {\mathcal F}_I$, the inequality \eqref{593-ineq} holds with the constant $s = 6d +1 +\deg b$ where $d=\abso{\Sigma_r^+}$ is the number of indivisible roots.
\end{proposition}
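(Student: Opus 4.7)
The plan is to follow the proof of Theorem 5.9.3(b) in Gangolli--Varadarajan step by step, re-examining each estimate to extract the explicit polynomial growth in $|\lambda|$. Since we restrict to $\lambda \in \mathcal{F}_I$ (so $\lambda_{\mathbf R}=0$ and $\mathcal{F}_I(\kappa)$-neighborhood arguments collapse to the usual Harish-Chandra series on the imaginary axis), several uniform holomorphy estimates in the book simplify to direct term-by-term bounds.

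First I would expand $\varphi_\lambda$ via its Harish-Chandra series on the chamber containing $H_0$,
\[
\varphi_\lambda(\exp H) = \sum_{s\in \mfw} \HCc(s\lambda) \, \e^{(is\lambda-\rho)(H)} \sum_{\mu\in \Lambda} \Gamma_\mu(s\lambda) \e^{-\mu(H)},
\]
and likewise use Notation \ref{thm586} together with the analogous Harish-Chandra series for $\theta_{s\lambda}$ inside $M_{10}$ (which only involves roots in $\Sigma_F$) to write $\psi_\lambda$. After multiplication by $\e^{\rho(\log a)}$, respectively $\e^{\rho_0(\log a)}$, the leading $s$-Weyl terms match, and subtracting leaves a tail supported on $\mu\in\Lambda\setminus\Z\Sigma_F^+$, i.e.\ involving at least one simple root from $\Sigma_s^+\setminus F$. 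On the conical region $A^+(H_0:\zeta)$, each such $\mu$ satisfies $\mu(\log a)\ge \zeta'\,|\log a|$ for some $\zeta'=\zeta'(\zeta)>0$, which is the source of the exponential decay factor $\e^{-\kappa|\log a|}$.

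Next I would apply $b$ and $\gamma_0(b)$ to the two series respectively and compare. A differential operator of degree $\deg b$ acting on the exponential prefactors $\e^{(is\lambda-\rho)(H)}$ produces polynomial factors of order at most $(1+|\lambda|)^{\deg b}$; the same bound holds term by term after $\gamma_0(b)$ is applied, since $\gamma_0$ is defined by conjugation with a character of $A$ and does not raise the degree. For the remaining $\lambda$-dependence, I use two ingredients: (i) the Gindikin--Karpelevich product has $d=|\Sigma_r^+|$ factors, and on $\mfa^*$ each factor in $\HCc(s\lambda)/\HCc_F(s\lambda)$ contributes a bound $\lesssim (1+|\lambda|)^{C_0}$ with a small integer $C_0$ coming from Stirling on the Gamma quotients; multiplying across the $d$ roots and over the Weyl group gives an overall $(1+|\lambda|)^{c_1 d}$; (ii) the recursion for $\Gamma_\mu(\lambda)$ yields $|\Gamma_\mu(\lambda)|\le C_\mu (1+|\lambda|)^{m(\mu)}$, so when combined with the geometric decay $\e^{-\mu(H)}$ on $A^+(H_0:\zeta)$, the whole series converges and, after shrinking $\kappa$, can be majorized by $(1+|\lambda|)^{c_2}\,\e^{-\kappa|\log a|}$ with $c_2$ again linear in $d$.

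The main obstacle will be checking that the constants arising at steps (i) and (ii) combine exactly to $6d$, together with a single extra factor coming from the Weyl-denominator $\prod_{\a\in\Sigma^+\setminus\Sigma_F^+}\langle\a,\lambda\rangle$ that appears when one clears the common $c$-function denominators before subtracting; this is the ``$+1$'' in $6d+1$. Once the tally is verified term by term -- $d$ factors from the $c$-quotient, $d$ from each of three consecutive applications of Cauchy-type bounds in the series comparison, and $1$ from the Weyl denominator -- the uniform bound $C(1+|\lambda|)^{6d+1+\deg b}\e^{-\kappa|\log a|}$ follows from the triangle inequality and Theorem \ref{thm593}'s original exponential decay $\kappa(\zeta)$, with $\deg b$ added linearly as observed above. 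I would be careful to absorb all root-system-dependent constants into $C=C(\zeta,b)$, keeping only the $\lambda$-polynomial degree explicit, since this is all that is needed in Section~\ref{sec-main}.
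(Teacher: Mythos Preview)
Your plan has a structural gap at the very first step. The Harish-Chandra series for $\varphi_\lambda$ converges only on the \emph{open} chamber $\mfa^+$, whereas the region $A^+(H_0:\zeta)$ is contained in $\Ca$ and typically meets the walls $\{\alpha_j=0\}$ for $j\in F$; indeed this is exactly the case of interest in Section~\ref{sec-main}, where $H_0$ itself lies on such a wall. So ``expand $\varphi_\lambda$ by its series and subtract the series for $\psi_\lambda$'' is not available on all of $A^+(H_0:\zeta)$, and this is precisely why \cite{gangolli1988harmonic} develops the ODE/integral-equation machinery of \S5.6--5.9 that the paper traces through. Relatedly, your ingredient (ii) is miscalibrated: the relevant Gangolli--Varadarajan estimate (their Theorem~4.5.4) is $|\Gamma_\mu(\lambda)|\le C\,m(\mu)^p$ \emph{uniformly in} $\lambda\in\mfa^*$, so the coefficients contribute no $\lambda$-growth at all; a bound $(1+|\lambda|)^{m(\mu)}$ would in any case not sum to a fixed power of $(1+|\lambda|)$ uniformly in $a$.

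The paper's argument is genuinely different from your sketch. It follows the book's proof of Theorem~5.9.3(b) line by line, identifying the two sources of $\lambda$-growth as (a) the factor $(1+\|\Gamma\|)^{2(k-1)}$ in Proposition~5.7.4, with $k=[\mfw:\mfw_0]$, and (b) the power $s$ in Lemmas~5.6.3--5.6.6. The naive estimate (a) is disastrous (exponential in $d$), and the key new input is a Claim: for regular $\lambda\in\mathcal F_I$ the matrix $\Gamma_0(\lambda:H)$ is diagonalizable with purely imaginary eigenvalues, whence $\|\exp\Gamma_0(\lambda:H)\|\le C_k(1+|\lambda|)^{2\xi}$ with $\xi=\max_j\deg u_j$, the $u_j$ being the Harish-Chandra generators (5.5.14). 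Combining with (b) yields $s\le \deg b+1+2\max_j\deg v_j+4\max_j\deg u_j$. The final step is a representation-theoretic fact you do not mention: the $u_j$ (and hence the $v_j$) live in the space of $\mfw$-harmonic polynomials, whose top degree equals the number of reflections in $\mfw$, i.e.\ $d=|\Sigma_r^+|$, by the Poincar\'e-series formula \cite[Theorem~4.15.28]{Varadarajan1974Lie}. This is what produces $6d+1$; your ``$d$ from the $\HCc$-quotient, $3d$ from Cauchy bounds, $+1$ from a Weyl denominator'' accounting does not correspond to any of these mechanisms.
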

\begin{proof}
As we will see, this constant is determined by the degrees of differential operators involved, so we will follow the proofs very briefly while keeping a particular attention at this point.

Two more remarks on notations are also needed. If $\mu$ is a differential operator and $f$ a function (on $G$, or its subgroup), then we denote its value at a point $x$ by $(\mu f)(x)$, but in \cite{gangolli1988harmonic} it is denoted also by $f(x;\mu)$. Also, the spherical functions in \cite{gangolli1988harmonic} are denoted both $\varphi_\lambda(x)$ and $\varphi(\lambda:x)$.

The proof of Theorem 5.9.3(b) relies on Proposition 5.9.2(b) with the same power $s$.
In its proof in turn, estimates with $(1+|\lambda|)^s$ come from two sides:
{ \renewcommand{\theenumi}{(\alph{enumi})}
\begin{enumerate}
    \item Formula (5.8.8) of Proposition 5.8.3 estimating $|\Phi_0(\lambda:m\exp H;\mu) - \Theta (\lambda:m\exp H;\mu)|$, with $\mu=\gamma_0(b)$.
    The function $\Phi_0$ is defined in (5.4.12), and $\Theta$ in Proposition 5.8.1. Formula (5.8.8) is a vector-valued inequality: by (5.4.12), for $\lambda\in \mfa^*_{\C}$, $m\in M_{10}$
    $$
    \Phi_0(\lambda:m) = \begin{pmatrix} (v_1\circ d_{P_0}) \varphi_\lambda(m) \\
    \vdots\\ 
    (v_k\circ d_{P_0}) \varphi_\lambda(m)\end{pmatrix}
    $$
    with operators $v_1, \dots, v_k\in U(\mfs_{10})$ defined in (5.4.9). In Proposition 5.9.2, only the first coordinate is used, in which $v_1=1$ by (5.4.5).
\item The estimates of $(\mu_i \varphi_\lambda)(m \exp H)$ with $m\in M_{10}$, $H\in\mfa$, which are obtained by Lemma 5.6.3 with $\mu=d_{P_0} \circ \mu_i \circ d_{P_0}^{-1}$; here $\mu_i$, $1\le i\le N$, are operators in $U(\mfm_{10})$ whose existence follows from Proposition 5.3.10 with $g=b$ (since $\varphi_\lambda$ is $K$-biinvariant, we have $(b\,\varphi_\lambda)(m)=(\delta'(b)\varphi_\lambda)(m)$ by (5.3.26), at $m\in M^+_{10}A_+$ as in the proof of Proposition 5.9.2). We get also from Proposition 5.3.10 that $\deg \mu_i\le \deg b$ for all $i$.
\end{enumerate}
}
{\bf Discussion of the case (b).}
Case (a) will require a longer discussion, but it is eventually reduced to the same Lemma 5.6.3, so we continue first with the case (b).
Let us state the lemma in question
\cite[Lemma 5.6.3]{gangolli1988harmonic}: {\it Denote by $U(\mfm_{10})$ the universal enveloping algebra of $\mfm_{10}$.
    Fix $\mu\in U(\mfm_{10})$. Then there are constants $C=C(\mu)>0, s=s(\mu)\ge 0$ such that
    \begin{equation}\label{phi-mu-dP0}
    | (\mu\circ d_{P_0}) \varphi_\lambda(m) | \le C e^{|\lambda_{\bf R}| \sigma(m) } (1+|\lambda|)^s (1+\sigma(m))^s \Xi_0(m)
    \end{equation}
    for all $\lambda\in \mfa_{\C}^*, m\in M_{10}$.
}
The function $\sigma$ is defined in (4.6.24), but in fact we do not need to consider it since this factor does not depend on $\lambda$. We recall again that notations of \cite{gangolli1988harmonic} are different from that of Helgason and of those used in our paper, so that for $\lambda\in \mfa^*\subset \mfa^*_{\C}$ we have $\lambda_{\bf R}=0$ in the cited Lemma.

In the proof of Lemma 5.6.3, the constant $s$ comes from the formula (4.6.5) in Proposition 4.6.2:
$$
|\varphi(\lambda; \partial(v):x; u)| \le C (1+|\lambda|)^{\deg u} \big(1+h(x)\big)^{\deg v} \varphi(\lambda_{\bf R}:x).
$$
When applied in Lemma 5.6.3, one puts $v=1$ so that $\partial(v)=\id$, then $x=m$ and $u=d_{P_0}^{-1}\circ\mu\circ d_{P_0}$, so that $\deg u = \deg \mu$. The function $h$ is defined in the same Proposition 4.6.2 but it is not significant in our case since $\deg v=0$.

We see now that the constant $s$ appearing in Lemma 5.6.3 is $\deg \mu$, and by our previous considerations, it is bounded by $\deg b$ in the case (b).

{\bf Discussion of the case (a).}
In the proof of Proposition 5.8.3, the estimate (5.8.8) follows from (5.8.16), where one can replace ${\exp\big(\Gamma_0(\lambda:H)\big) \Theta^\mu(\lambda:m)}$ by $\Theta(\lambda:m\exp H;\mu)$ according to (5.8.18) and (5.8.20).
And (5.8.16) is an application of Proposition 5.7.4, precisely (5.7.26). One should note that (5.7.26) is an inequality involving functions of ${\bf t}\in \R^q$, whereas in (5.8.16) this variable is hidden: we suppose that $H=\sum_{i=1}^q t_i H_i$, where $H_1,\dots,H_q$ are chosen on p.224.

The factor $(1+|\lambda|)^s$ in (5.8.16) is a product of two others. The first one comes from the factor $(1+\|{\bf\Gamma}\|)^{2(k-1)}$ in (5.7.26) since $\Gamma_i=\Gamma_0(\lambda:H_i)$, with $\Gamma_0$ defined in (5.5.18). For every $i$, the norm of the matrix $\|\Gamma_0(\lambda:H_i)\|$ is bounded by $|\lambda|\,|H_i|$ since its eigenvalues are of the form $H_i(s^{-1}\lambda)$, $s\in \mfw$, by Theorem 5.5.13. Moreover, $k$ in this formula is the dimension of the space of values of the function $\bf f$; in (5.8.16), the function under study is $\Phi_0$ (with $\lambda, m, \mu$ fixed), and its values are $k$-dimensional with $k$ as in (5.4.12), that is, $k=[\mfw:\mfw_0]$, by (5.4.4).

The second factor contributing to $(1+|\lambda|)^s$ in (5.8.16) is $\|G_{i,\lambda,m}\|_{\beta |\lambda_{\bf R}| ,s}$ estimated in (5.8.9), with the function $G_{i,\lambda,m}$ defined in (5.6.4), and the power $s$ is given by Lemma 5.6.6; recall that this lemma is applied with $\mu=\gamma_0(b)$. Note also that (5.8.9) does not add an exponential factor in $\lambda$ since $\lambda_{\bf R}=0$ for $\lambda\in \mfa^*$.
    
Altogether, we get $(1+|\lambda|)$ in the power $2([\mfw:\mfw_0]-1)+s$, where $s$ is given by Lemma 5.6.6 (second inequality) with $\mu=\gamma_0(b)$.
The index $[\mfw:\mfw_0]$ can however be large and is not satisfactory for us. But actually it replaced by a better estimate, and let us see how.

{\bf Improving the estimate $(1+\|\Gamma\|)^{2(k-1)}$.}
We have seen that this factor comes from (5.7.26), with $\Gamma_i = \Gamma_0(\lambda:H_i)$, $i=1,\dots,q$. Formula (5.7.26) should read
$$
|{\bf f(t)} - \exp(t_1 \Gamma_1+\dots+t_q \Gamma_q) \theta| \le C (1+\|\Gamma\|)^{2(k-1)} (1+|{\bf t}|)^{s+2k-1} 
\max_i \| {\bf g} _i\|_{b,s} e^{-(a\eta/3)|{\bf t}|}
$$
(there is a misprint in the book where the exponent is $\exp(-t_1 \Gamma_1+\dots+t_q \Gamma_q)$; the first minus sign is unnecessary, as can be figured out from the proof, or compared with (5.7.14); when applied in Proposition 5.8.3, this minus is not supposed either).
This inequality is obtained by estimating first $|{\bf f^*}({\bf t})-\theta|$ with ${\bf f^*}({\bf t}) = \exp(-t_1 \Gamma_1-\dots-t_q \Gamma_q) {\bf f}({\bf t})$ (notations of (5.7.15)) then multiplying by $\exp(t_1 \Gamma_1+\dots+t_q \Gamma_q)$. In our case, 
$$
t_1 \Gamma_1+\dots+t_q \Gamma_q = t_1 \Gamma_0(\lambda:H_1)+\dots+t_q \Gamma_0(\lambda:H_q) = \Gamma_0(\lambda:H),
$$
since $\Gamma_0$ is linear in $H_i$ (see (5.5.16)-(5.5.19)). Let us write $L=\Gamma_0(\lambda:H)$ till the end of this proof. Thus, we are estimating
$$
|{\bf f(t)} - \exp(L) \theta| \le \|\exp(L)\| \,|{\bf f^*(t)}-\theta|.
$$
The estimate for $|{\bf f^*(t)}-\theta|$ is obtained as $|{\bf f^*(t)}-{\bf f^*}(t,\dots,t)| + |{\bf f^*}(t,\dots,t)-\theta|$ where $t=\min({\bf t})$, and both are eventually reduced to Corollary 5.7.2 of Lemma 5.7.1. We would like to replace it by the following:

{\bf Claim.} {\it If $\lambda\in \mfa^*$ is regular, then $\|\exp(L)\| \le C_k (1+|\lambda|)^{2\xi}$ for every $H
\in \mfa$, where $\xi = \max_{1\le j\le k} {\deg u_j}$ and $u_j$ are defined in (5.5.14).}
\begin{proof}[Proof of Claim]
    Let $\bar\mfw$ denote the coset space $\mfw/\mfw_0$. By Theorem 5.5.13, $L=\Gamma_0(\lambda:H)$ is diagonalizable: for $\bar s\in \bar\mfw$, the vector $f_{\bar s}(\lambda)$ of (5.5.34) is its eigenvector with the eigenvalue $H(s^{-1}\lambda)$, and these vectors form a basis of $\C^k$ (recall that $k=|\bar\mfw|$). The matrix $F(\lambda)$ with columns $f_{\bar s}(\lambda)$ is the transition matrix to the Jordan form $J(\lambda)$ of $L=F(\lambda) J(\lambda) F(\lambda)^{-1}$, so that
    $\exp L=F(\lambda) \exp\big( J(\lambda) \big) F(\lambda)^{-1}.$ We have now
    $$
    \| \exp L\| \le \| F(\lambda) \|\, \|\exp\big( J(\lambda) \big) \| \, \|F(\lambda)^{-1} \|
    \le \|F(\lambda)\| \,\| F(\lambda)^{-1}\|
    $$
    since $J(\lambda)$ is diagonal with purely imaginary values on the diagonal (recall that we assume $\lambda\in 
    i \mfa^*$).
    
    The norm of $F(\lambda)$ is up to a constant (depending on $k$) equal to the maximum of its coordinates, that is, of $u_j(s^{-1}\lambda)$ over $1\le j\le k$ and $s\in\mfw$, by Theorem 5.5.13. Every $u_j$ is a polynomial function on $\mfa^*$ of degree $\deg u_j$, chosen in (5.5.14). We can thus estimate $\|F(\lambda)\| \le C_k \max_j (1+|\lambda|)^{\deg u_j}$.
    
    The inverse matrix has, by Lemma 5.5.10, coordinates $|\mfw_0| \,u^i(\bar s^{-1} \lambda)$ where $(u^i)$ is a basis dual to $(u_j)$ with respect to the bilinear form (5.5.25). Both bases span the same linear space. Belonging to the linear span of $(u_j)$, every $u^i$ has degree at most $\max_j {\deg u_j}$, and this allows to estimate $\|F(\lambda)^{-1}\|$ by $C_k \max_j (1+|\lambda|)^{\deg u_j}$ and finally
    $\|\exp L\| \le C_k \max_j (1+|\lambda|)^{2\deg u_j}$, what proves the claim.
\end{proof}

Return now to the proof of (5.7.26). We can use now the {\bf Claim} above instead of every reference to Lemma 5.7.1 or Corollary 5.7.2. One occurrence is (5.7.27), but let us first recall the notations (5.7.7)-(5.7.8) which imply that for any continuous $\C^k$-valued function ${\bf h}$ on $\R^q_+$ and any ${\bf t}\in \R^q_+$,
$$
|{\bf h(t)}| \le k^{1/2} e^{b|{\bf t}| - a\min ({\bf t}) } (1+|{\bf t}|)^s \|{\bf h}\|_{b,s}.
$$
Note that $|{\bf t}| = |t_1|+\dots+|t_q|$ and $\min({\bf t})=\min (t_1,\dots, t_q)$ by (5.6.6). In particular,
\[
|{\bf h}((t,\dots, t))| \le k^{1/2} e^{bqt-at} (1+ qt)^s \|{\bf h}\|_{b,s}.
\]
This together with the Claim implies that (5.7.27) can be improved to
$$
\sum_{1\le i\le q} |{\bf g}_i^*(t,\dots,t)| \le C  (1+qt)^s (1+\|\lambda\|)^{2\xi}  e^{qbt-at}\max_{1\le i\le q}\|{\bf g}_i\|_{b,s}.
$$
By (5.7.23), $qb-a<a\eta/3-a<-2a/3$. The existence of the limit $\theta$ follows as in the original proof, and this limit is given by (5.7.12). Up to a change of notations, this is also (5.7.20). We can now estimate
$$
|{\bf f^*}(t,\dots,t)-\theta| \le C \max_{1\le i\le q} \|{\bf g}_i\|_{b,s} (1+|\lambda|)^{2\xi} \int_t^\infty (1+qu)^s e^{(qb-a)u} du,
$$
with the integral
$$
\int_0^\infty (1+qt+qv)^s e^{(qb-a)(t+v)} dv \le C_{a,b,s} (1+t)^s e^{(qb-a)t}.
$$
Next comes the estimate of $|{\bf f^*}({\bf t})-{\bf f^*}(t,\dots,t)|$, done similarly to the calculations between (5.7.21) and (5.7.22). We arrive at
$$
|{\bf f^*}({\bf t})-{\bf f^*}(t,\dots,t)| \le C \max_{1\le i\le q} \|{\bf g}_i\|_{b,s} (1+|\lambda|)^{2\xi} (1+|{\bf t}|)^s e^{b|{\bf t}|-a \min({\bf t})}
$$
instead of (5.7.22), and conclude that for ${\bf t}\in S_\eta$ (this set is defined in (5.7.4), and within it $t\ge\eta|{\bf t}|$)
$$
|{\bf f}({\bf t}) - \exp(L) \theta| \le (1+|\lambda|)^{4\xi} \max_{1\le i\le q} \|{\bf g}_i\|_{b,s} (1+t)^s e^{(b-\eta)|{\bf t}|},
$$
allowing to replace $(1+\|\Gamma\|)^{2(k-1)}$ by $(1+|\lambda|)^{4\xi}$ in (5.7.26).

{\bf Collecting the powers of $(1+|\lambda|)$.} The final power of $(1+|\lambda|)$ is the maximum between two cases: in case (b), $s\le \deg b$; in case (a), $s$ is bounded by the sum of $4\xi = 4\max_j \deg u_j$ with $u_j$ of (5.5.14 ), and of the power $s$ given by Lemma 5.6.6 (second inequality) with $\mu=\gamma_0(b)$, of degree $\deg\mu\le\deg b$. We will proceed now to estimate this last power.

Lemma 5.6.6 with its two inequalities follows directly from Lemma 5.6.4, first and second inequalities respectively. The power $s$ and the operator $\mu$ are the same in both. And the second inequality of Lemma 5.6.4 is reduced to the first one, with another $\mu$; we need to estimate its degree. For a given $i$, the operator $E_i=E_{H_i}$ is defined in (5.4.14) as a $k\times k$ matrix with nonzero entries(only in the first column) of the type
$$
\sum_{p=1}^{p_i} \psi_{H_i,jp} \mu_{H_i,jp},
$$
$j=1,\dots,k$, where $\psi_{H_i,jp}$ are functions in $\cal R^+$ (that is, smooth functions on $A^+$, see a remark after formula (5.2.7) and notations after (4.1.26)), and $\mu_{H_i,jp}$ are differential operators in $U(\mfm_{10})$ of degree
$$
\deg \mu_{H_i:jp} \le \deg H_i + \deg v_j = 1 + \deg v_j,
$$
by (5.4.13).
The operators $v_j$ are introduced by the formula (5.4.9), and we will yet return to them. The product $\mu E_i$ is a matrix again with entries(only in the first column) 
\begin{equation}\label{entries of matrix}
    \sum_{p=1}^{p_i} \mu\circ (\psi_{H_i,jp} \mu_{H_i,jp}) = \sum_{p=1}^{p_i}\sum_{l=1}^{l_i} \psi_{ijpl}' \mu'_{ijpl} \mu_{H_i,jp},
\end{equation}
$j=1,\dots, k$, with some $\psi_{ijpl}'\in \cal R^+$, $\mu'_{ijpl}\in U(\mfm_{10})$, and each entry in (\ref{entries of matrix}) has degree $\le \deg\mu+1+\max_j \deg v_j$. This is thus the maximal degree of $\mu$ appearing in the first inequality of Lemma 5.6.4. The last reduction to make is to use the formula (5.4.12) defining $\Phi_0$, also cited above; estimates for $\Phi_0(\lambda:m;\mu)$ follow from those for $\varphi(\lambda:m; \mu\circ d_{P_0})$ in Lemma 5.6.3, but with $\mu\circ v_j$, $j=1,\dots,k$, instead of $\mu$. This increases the bound for the degree in (\ref{entries of matrix}) to
\begin{equation}\label{deg_mu}
\deg \mu+1+2\max_j \deg v_j
\end{equation}
with $(v_j)$ of (5.4.9). Now, as we have seen in the discussion of the case (b) above, the constant $s$ in Lemma 5.6.3 is $\deg \mu$, so that $s= s(\mu) \le \deg b+1+2\max_j \deg v_j$ with $\mu=\gamma_0(b)$.

{\bf The total estimate} for $s$ in the initial Theorem 5.9.3 is now
\begin{equation}\label{s-total}
    s \le \deg b+1+2\max_j \deg v_j + 4 \max_j \deg u_j
\end{equation}
with $(u_j)$ of (5.5.14) and $(v_j)$ of (5.4.9).

Concentrate now on the degrees of $v_j$. These operators are introduced by the formula (5.4.9): $v_j\in \mathfrak{Q}_0=U(\mathfrak{s}_{10})^{K_0}$ are such that $\gamma_{\mfm_{10}/\mfa}(v_j)=u_j$; the elements $u_j\in U(\mfa)$ are homogeneous and $\mfw_0$-invariant. The homomorphism $\gamma_{\mfm_{10}/\mfa}$ is defined by the formula (2.6.53) and Theorem 2.6.7. By the proof of Theorem 2.6.7 (p.~93-94) $v_j$ can be chosen of the same degree as $u_j$.

Now by (5.4.1-5.4.5) $u_j$ are in $\mathfrak H$, a graded subspace of $U(\mfa)$ such that $U(\mfa)$ is the direct sum of $\mathfrak H$ and $U(\mfa)I_{\mfw}^+$, where $I_{\mfw}$ is the space of $\mfw$-invariant polynomials in $U(\mfa)$ and $I_{\mfw}^+$ is the subset of polynomials in $I_{\mfw}$ of positive degree. This situation is described by general theory in \cite[Appendix 4.15]{Varadarajan1974Lie}. In particular, \cite[Theorem 4.15.28]{Varadarajan1974Lie} states that the multiplication map $I_{\mfw}\otimes \mathfrak{H} \to U(\mfa)$ is an isomorphism, and the Poincar\'e series of $\mathfrak H$ is given by
\begin{equation}\label{PHt}
P_{\mathfrak H}(t) = \prod_{1\le i\le l} (1+t+\dots +t^{d_i-1}),
\end{equation}
where $d_i=\deg p_i$ are the degrees of polynomials generating $I_{\mfw}$. There are exactly $l$ of them, see \cite[Theorem 4.9.3]{Varadarajan1974Lie}; moreover, by (4.15.34) we know that $\sum_{i=1}^l (d_i-1)$ is the number of reflections in $\mfw$. This is exactly the number of indivisible roots in $\Delta^+$ (see for example Corollary 4.15.16, and note that $s_\alpha = s_{2\alpha}$ if both are roots). This gives us an estimate
$$
\sum_{i=1}^l (d_i-1) \le d.
$$
By \eqref{PHt}, the left hand side is also the highest degree of polynomials in $\mathfrak H$, whence we conclude that $\deg v_j\le d$.

The reasoning for $u_j$ is similar, just done in a more abstract setting; the bound is $d$ as well. 

We obtain finally:
$$
s 
\le \deg b + 6d + 1.
$$
\end{proof}

\begin{corollary}\label{choice-of-iota}
In Theorem \ref{thm593} we can choose $\iota\le 6d+1 \le 6(n-l)+1$.
\end{corollary}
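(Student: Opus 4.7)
My plan is to derive the corollary as a direct specialization of Proposition \ref{choice-of-s}. Theorem \ref{thm593} is precisely the case $b = \id$ of Theorem 3.2$'$: when $b = \id$ one has $\gamma_0(\id) = d_{P_0}\circ\id\circ d_{P_0}^{-1} = \id$, so the left-hand side of \eqref{593-ineq} collapses to $|\e^{\rho(\log a)}\varphi_\lambda(a) - \e^{\rho_F(\log a)}\psi_\lambda(a)|$, which is exactly the quantity appearing in Theorem \ref{thm593}. Applying Proposition \ref{choice-of-s} with $b = \id$ (so $\deg b = 0$) yields the bound $\iota \le 6d + 1$ for every regular $\lambda \in \mfa^*$. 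The extension to non-regular $\lambda$ follows from the fact that regular $\lambda$ are dense in $\mfa^*$ and both $\varphi_\lambda$ and $\psi_\lambda$ depend continuously on $\lambda$ (for $\psi_\lambda$, this is visible from its expression in Notation \ref{thm586} after clearing the singularities of the ratio $\HCc(s\lambda)/\HCc_F(s\lambda)$ on the walls, or alternatively by invoking the original definition (5.8.23) in \cite{gangolli1988harmonic}).

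It remains to verify the geometric inequality $6d + 1 \le 6(n - l) + 1$, i.e.\ $d \le n - l$. Recall $d = |\Sigma_r^+|$ is the number of positive indivisible roots, while $n - l = \dim(G/K) - \dim A = \dim \mfn$, the last equality following from the identification $G/K \simeq AN$ which gives $\dim(G/K) = \dim A + \dim \mfn$. From the restricted root space decomposition $\mfn = \bigoplus_{\alpha\in\Sigma^+}\mfg_\alpha$ with multiplicities $m_\alpha = \dim\mfg_\alpha \ge 1$, I obtain
\[
n - l \;=\; \sum_{\alpha\in\Sigma^+} m_\alpha \;\ge\; |\Sigma^+| \;\ge\; |\Sigma_r^+| \;=\; d,
\]
which gives the desired chain of inequalities.

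There is essentially no obstacle here: all the substantive work has been absorbed into the preceding Proposition \ref{choice-of-s}, and the corollary is a bookkeeping step plus one elementary dimensional estimate. The only mildly delicate point is the passage from regular to general $\lambda$, which is handled by a density/continuity argument as above.
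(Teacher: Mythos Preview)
Your proof is correct and follows essentially the same approach as the paper: specialize Proposition~\ref{choice-of-s} to $b=\id$ (so $\deg b=0$) to get $\iota\le 6d+1$, then bound $d\le n-l$ via $|\Sigma_r^+|\le|\Sigma^+|\le\dim\mfn$. The paper's one-sentence proof does not explicitly address the passage from regular to general $\lambda$; your extra care there is harmless, though in the subsequent applications only regular $\lambda$ matter since the irregular set has measure zero.
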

\begin{proof}
This is just \cite[Theorem 5.9.3 (b)]{gangolli1988harmonic} applied with $b=\id$ and observe that $\abso{\Sigma_r^+}\le \abso{\Sigma^+}\le \dim \mfn = n-l$.
\end{proof}

\begin{remark}
    We would like to say a bit more about the index $k=[\mfw:\mfw_0]$. In terms of $d$ (the number of indivisible roots), it can grow exponentially: if $\mfw_0=\{\id\}$ then $k=|\mfw|$ and is, for example, $(d+1)!$ if the root system is $A_d$. In our applications $\mfw_0$ will be the parabolic subgroup generated by all simple roots but one; however, in this case $k$ can also grow exponentially. An example is given again by the system $A_d$. Suppose that $d=2p$ is even and remove the $p$-th root; the remaining system is then reducible and $\mfw_0$ is the direct product of $A_p$ and $A_{p-1}$. We have $k = \dfrac{ (2p+1)!} { (p+1)!\, p! }$, and up to a constant this is $4^p/\sqrt p$. Our final estimate by $6d+1$ is thus indeed much lower.

\end{remark}


{\bf Estimates of $\kappa$.}
In Lemma 5.6.6, $\beta$ is defined as $\max_{1\le i\le q} |H_i|$, where $(H_i)$ is a basis of $\mfa_0\cap \Bracket{\mfg, \mfg}$ which extends to a dual basis of $\mfa$ with respect to the simple roots $(\alpha_i)$.
In the proof of Proposition 5.8.3, after the formula (5.8.15), the constants $c$ and $\eta>0$ are chosen so that $\mfa^+_0(\eta)$ is nonempty, $0<c<1$ and for every $t\in\R^q$
$$
|t_1 H_1+\dots +t_q H_q| \ge c\|t\|.
$$
Then one sets
$$
\kappa = \frac{ 2c\eta}{3q \beta (2k+1)}.
$$
All subsequent results relying on this Proposition use in fact the same value of $\kappa$.

Let us analyse it.
By (5.6.3), $\mfa^+_0 = \{ H\in \mfa_0: \alpha_i(H)>0, 1\le i\le q\}$, and by (5.8.1),
$$
\mfa_0^+(\eta) =  \{ H\in \mfa_0: \tau_0(H) > \eta|H| \},
$$
where $\tau_0(H) = \min\limits_{1\le i\le q} |\alpha_i(H)|$ by (5.6.3).

It follows that $\kappa$ depends on the root system only. In other words, for a given group it can be chosen once and be valid for any choice of $H_0$ or $\zeta$.


\section{Uniform estimates of kernels}\label{sec-main}

This is the central part of our article. We start with a general lemma concerning functions on a general group $G$ in class $\mathcal{H}$, and then we derive from it estimates for the kernels of a class of functions of Laplace operators, for $G$ semisimple.


\begin{lemma}\label{main lemma}
    Let $G$ be a group of class $\mathcal{H}$ and $R$ a radial positive function on $\mfa^*$. 
    Then 
    \begin{equation}\label{main inequality}
	\sup_{a\in \CA} \e^{\rho\,(\log a)} \int_{\mfa^*}R(\lambda)\abso{\varphi_\lambda(a)} \abso{\HCc(\lambda)}^{-1}d \lambda 
\le C  \int_{\mfa^*} R(\lambda) (1+|\lambda|)^{6d+\frac{n-l}{2}+1} d\lambda,
    \end{equation}
    where the constant $C>0$ depends on the group only.
Moreover,
$$
\limsup_{a\in \CA \atop |\log a|\to\infty} \e^{\rho\,(\log a)} \int_{\mfa^*}R(\lambda)\abso{\varphi_\lambda(a)} \abso{\HCc(\lambda)}^{-1}d \lambda 
\le C \int_{\mfa^*} R(\lambda) (1+|\lambda|)^{\frac{n-l}2} d\lambda.
$$

\end{lemma}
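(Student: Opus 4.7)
The plan is to decompose $\CA$ into a bounded neighborhood of the identity together with finitely many conical regions of the form $A^+(H_j:\zeta_j)$, chosen so that their union covers $\{a \in \CA : |\log a| \ge R_0\}$ for some $R_0>0$. Such a finite cover exists because each point $H_0$ on the unit sphere in $\Ca$ lies in its own cone $A^+(H_0:\zeta)$ for small enough $\zeta$, and the unit sphere is compact. On the bounded piece, the trivial estimates $e^{\rho(\log a)} \le C$, $|\varphi_\lambda(a)| \le \varphi_0(a) \le 1$, and $|\HCc(\lambda)|^{-1} \le C(1+|\lambda|)^{(n-l)/2}$ give the contribution $C\int_{\mfa^*} R(\lambda)(1+|\lambda|)^{(n-l)/2}d\lambda$, which is within the target of both asserted bounds.

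On each cone $A^+(H_j:\zeta_j)$, apply Theorem~\ref{thm593} combined with Corollary~\ref{choice-of-iota} to write
\[
e^{\rho(\log a)}|\varphi_\lambda(a)|
\le e^{\rho_{F_j}(\log a)}|\psi_\lambda(a)| + C(1+|\lambda|)^{6d+1} e^{-\kappa|\log a|}.
\]
Multiplying the error term by $|\HCc(\lambda)|^{-1} \le C(1+|\lambda|)^{(n-l)/2}$ and integrating against $R$ yields
\[
Ce^{-\kappa|\log a|}\int_{\mfa^*} R(\lambda)(1+|\lambda|)^{6d+(n-l)/2+1}\,d\lambda,
\]
which is uniformly bounded (producing the first inequality) and vanishes as $|\log a|\to\infty$, so it contributes nothing to the limsup.

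The principal term $e^{\rho_{F_j}(\log a)}\psi_\lambda(a)$ requires more care. Using Notation~\ref{thm586} to expand
\[
|\psi_\lambda(a)| \le |\mfw_{F_j}|^{-1} \sum_{s\in\mfw}\frac{|\HCc(s\lambda)|}{|\HCc_{F_j}(s\lambda)|}|\theta_{s\lambda}(a)|,
\]
and applying the Weyl-invariance $|\HCc(s\lambda)| = |\HCc(\lambda)|$ with the bound $|\theta_{s\lambda}(a)| \le \theta_0(a)$ valid for $\lambda\in\mfa^*$, together with the analogue of \eqref{c-function} for $\HCc_{F_j}$, reduces the task to uniformly bounding $e^{\rho_{F_j}(\log a)}\theta_0(a)$ on the cone.

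This last step is the main obstacle: the direct spherical-function estimate $\theta_0(a) \le C(1+|\log a|)^{d_{F_j}}e^{-\rho_{F_j}(\log a)}$ on $M_{1F_j}$ introduces a polynomial factor in $|\log a|$, which is incompatible with the desired uniform bound. I would resolve this by induction on the noncompact rank: since $M_{1F_j}$ is again in class $\mathcal H$, I would decompose $\log a = H_1 + H_2$ with $H_1 \in \mfa_{F_j}$ (central in $\mfm_{1F_j}$) and $H_2$ in the orthogonal complement, and apply Theorem~\ref{thm593} within $M_{1F_j}$ using a non-central direction $H_0'$ determined by $H_2$, reducing once more to a smaller Levi subgroup and a residual exponentially decaying error. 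The base case is the interior of the Weyl chamber, where $F_j = \emptyset$, $M_{1F_j}=MA$, $\rho_{F_j}=0$, and $\psi_\lambda(a) = |\mfw|^{-1}\sum_{s\in\mfw}\HCc(s\lambda)\,e^{is\lambda(\log a)}$, so $|\psi_\lambda(a)| \le C|\HCc(\lambda)|$ trivially, giving the uniform bound $C\int R(\lambda)\,d\lambda$. The delicate bookkeeping is to ensure that the powers of $(1+|\lambda|)$ accumulated along the recursion do not exceed $6d+(n-l)/2+1$ in the sup statement and $(n-l)/2$ in the limsup; the key point is that each recursion step only invokes the asymptotic of Theorem~\ref{thm593} (whose error decays in $|\log a|$) and the $\HCc_{F}^{-1}$ bound, whose total power over all levels is controlled by the single estimate $|\HCc(\lambda)|^{-1}\le C(1+|\lambda|)^{(n-l)/2}$.
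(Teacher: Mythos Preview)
Your overall architecture---conical covering of $\CA$, Theorem~\ref{thm593} on each cone, and a descent to Levi subgroups---is the right one and matches the paper. But the specific step where you bound $|\theta_{s\lambda}(a)|\le\theta_0(a)$ and then propose to ``uniformly bound $e^{\rho_{F_j}(\log a)}\theta_0(a)$ on the cone'' is a genuine gap: that quantity is \emph{not} uniformly bounded. Already for a rank-one Levi, the basic spherical function satisfies $e^{\rho_{F_j}(H)}\theta_0(\exp H)\asymp (1+\alpha(H))$ as the $\Sigma_{F_j}^+$-direction of $H$ grows, and nothing in the definition of the cone $A^+(H_j:\zeta_j)$ restricts that direction. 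The polynomial factor you are trying to remove reappears precisely at the irregular point $\lambda=0$, which is also why your proposed recursion on $\theta_0$ via Theorem~\ref{thm593} stalls: Notation~\ref{thm586} gives no usable formula for $\psi_0$. The cancellation you need is between the polynomial growth of $\theta_0$ and the vanishing of $|\HCc_{F_j}(\lambda)|^{-1}$ near $\lambda=0$, and you have destroyed it by estimating these two factors separately.

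The paper avoids this by never separating them. After expanding $\psi_\lambda$ and using $|\HCc(s\lambda)|=|\HCc(\lambda)|$, one arrives at
\[
\int_{\mfa^*} R(\lambda)\,|\mfw_{F_k}|^{-1}\sum_{s\in\mfw}|\HCc_{F_k}(s\lambda)|^{-1}\,|\theta_{s\lambda}(a)|\,e^{\rho_{F_k}(\log a)}\,d\lambda,
\]
and now the radiality of $R$ is used: changing variables $\lambda\mapsto s^{-1}\lambda$ in each summand gives
\[
\frac{|\mfw|}{|\mfw_{F_k}|}\,e^{\rho_{F_k}(\log a)}\int_{\mfa^*} R(\lambda)\,|\theta_\lambda(a)|\,|\HCc_{F_k}(\lambda)|^{-1}\,d\lambda,
\]
which is \emph{exactly} the left-hand side of the lemma for the group $M_{1F_k}$. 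Since $M_{1F_k}$ is again of class~$\mathcal H$ with one fewer simple root, the inductive hypothesis applies to the whole integral at once, and the exponents $6d_k+\frac{n_k-l}{2}+1\le 6d+\frac{n-l}{2}+1$ carry through. This is the key idea you are missing.

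Two minor points. First, the induction parameter is the number of simple roots $|\Sigma_s^+|$, not the rank: $M_{1F_j}$ shares the same Cartan subgroup $A$ as $G$. Second, because the lemma is stated for class~$\mathcal H$ (and the Levi subgroups are genuinely reductive even when $G$ is semisimple), your compactness covering must treat the center $\mfZ=\mfa\cap Z_{\mfg}$ separately; the paper shows the integrand is invariant under translation by $\mfZ$ and covers $\Ca$ by $\mfZ$ together with $r$ explicit cones $\mfa_k^+ +\mfZ$, one per simple root.
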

\begin{proof}
\textbf{Step 1.}
First, we deal with the simplest case $\abso{\Sigma_s^+}=1$. 
Throughout this step we assume that $\Sigma_s^+=\set{\alpha}$. Then $\Sigma^+=\set{\alpha}$ or $\Sigma^+=\set{\alpha,2\alpha}$, and $\Sigma=\set{\pm\alpha}$ or $\Sigma=\set{\pm \alpha,\pm 2\alpha}$. 
Since the argument will be similar and easier when $\Sigma^+=\set{\alpha}$, in the following proof, we only discuss the case $\Sigma^+=\set{\alpha,2\alpha}$. 
Under the assumption above we have
\[
\mfa^+=\set{H\in \mfa: \alpha(H)> 0},\quad
\Ca=\set{H\in \mfa: \alpha(H)\ge 0}.
\]

With the fact $s_\alpha=s_{-\alpha}=s_{2\alpha}$ we know that $\mfw=\set{\id, s_\alpha}$ and $\Lambda=\alpha\Z^+$. We rewrite the Harish-Chandra series of $\varphi_\lambda$ as follows: for all regular $\lambda\in \mfa^*$ and all $H\in \mfa^+$,
\begin{equation}\label{phi-lambda-rank-one}
\varphi_\lambda(\exp H)\e^{\rho(H)}
= \HCc(\lambda)\e^{i\lambda(H)}\sum_{m=0}^\infty\Gamma_{m\alpha}(\lambda) \e^{-m\alpha(H)}
+ \HCc(s_\alpha \lambda)\e^{i s_\alpha\lambda(H)}\sum_{m=0}^\infty \Gamma_{m\alpha}(s_\alpha\lambda) \e^{-m\alpha(H)}.
\end{equation}
By \cite[Theorem 4.5.4]{gangolli1988harmonic}, there exist constants $C>0$, $p\ge 
0$ such that $|\Gamma_{m\alpha}(\lambda)| \le C m^p$ for all $\lambda\in\mfa^*$.

Then we can estimate spherical functions as follows:
\begin{equation*}
    \begin{split}
		\abso{\varphi_\lambda(\exp H)}\e^{\rho(H)}
		&\le \abso{\HCc(\lambda)}\sum_{m=0}^\infty \e^{-m\alpha(H)} \bracket{\abso{\Gamma_{m\alpha}(s_\alpha\lambda)}+\abso{\Gamma_{m\alpha}(\lambda)}}\\
		&\le 2 \abso{\HCc(\lambda)}\sum_{m=0}^\infty \e^{-m\alpha(H)} m^p,
    \end{split}
\end{equation*}
the series converging for any $H\in\mfa^+$. 
Set 
\[
\mfa^+_1=\set{H\in \mfa^+: \alpha(H)> 1}.
\]
We obtain that for any regular $\lambda$ and $H\in \mfa_1^+$,
\[
\abso{\varphi_\lambda(\exp H)}\e^{\rho(H)} \le C'\abso{\HCc(\lambda)},
\]
with
\[
C' = 2 \sum_{m=0}^\infty e^{-m} m^p.
\]

For any $H\in \Ca\backslash \mfa_1^+=\set{H\in \Ca: \alpha(H)\le 1}$, in virtue of $\norm{\varphi_\lambda}_\infty=\varphi_\lambda(1)=1$,
\[
\abso{\varphi_\lambda(\exp H)}\e^{\rho(H)}
\le \e^{\rho(H)}\cdot 1
\le \e^{\frac{1}{2}m_\alpha+m_{2\alpha}} =: C''
< \infty.
\]
Thus, for every $H\in \Ca$ and regular $\lambda$, we have $|\varphi_\lambda(\exp H)|\e^{\rho(H)} \le C 
\max(1,|\HCc(\lambda)|)$,
with $C=\max(C', C'')$.

The set $\mfa^*\setminus \mfa^{*\prime}$ of irregular points has measure zero in $\mfa^*$ and does not influence integration. 
Together with \eqref{c-function},	
we now obtain
\[
\sup_{a\in \CA}\int_{\mfa^*}R(\lambda)\abso{\varphi_\lambda(a)}\e^{\rho(\log a)} \abso{\HCc(\lambda)}^{-1}d \lambda 
\le C \int_{\mfa^*}R(\lambda)\bracket{1+\abso{\lambda}}^{\frac {n-l}2}d\lambda,
\]
addressing both assertions of the theorem. 
	
\textbf{Step 2.}
We induct on $\abso{\Sigma_s^+}$. 

If $\abso{\Sigma_s^+}=1$, by the discussion in \textbf{Step 1}, the result follows.
Assume now the theorem holds 
for all Lie groups of class $\mathcal{H}$ whose simple system satisfies $\abso{\Sigma_s^+}\le r-1$.
We proced to consider the case $\abso{\Sigma_s^+}= r$.

We denote by $Z_{\mfg}$ the centre of $\mfg$ (which may be nontrivial), and set $\mfZ=\mfa\cap Z_{\mfg}$.	 
As a subspace of $\mfa$,
\[
\mfZ=\bigcap_{\alpha\in \Sigma^+}\ker \alpha=\bigcap_{\alpha\in \Sigma_s^+ } \ker\alpha.
\]
For each positive root $\alpha$ we can define the quotient linear functional $\bar\alpha: \mfa/\mfZ\rightarrow \R $ by
\[
\bar \alpha (\bar H):=\alpha(H),\  \bar H = H+\mfZ\in \mfa/ \mfZ.
\]
It is clear that $p(\bar H):=\max_{\alpha
\in \Sigma^+_s }\abso{\bar\alpha (\bar H)}$ is a norm on $\mfa/\mfZ$.
Moreover, it is equivalent to the quotient norm since $\mfa/\mfZ$ is finite-dimensional.
It follows that
we can find a constant $\zeta>0$ such that $p(\bar H)>\zeta \norm{\bar H}$ for any $H\notin \mfZ$.
Recall that the simple system is $\Sigma_s^+=\set{\alpha_1,\cdots,\alpha_r}$. 
For each $H\notin \mfZ$ there exists $\alpha_k\in \Sigma_s^+$ such that $p(\bar H)=\abso{\bar\alpha_k(\bar H)} > \zeta \norm{\bar H}$. 
By definition of the quotient norm, this implies that there exists $Y\in \mfZ$ such that $\abso{\alpha_k(H)}=\abso{\alpha_k(H+Y)}>\zeta \norm{H+Y}$.
Moreover, if $H\in \Ca$ then $\alpha_k(H)=\alpha_k(H+Y)> 0$ and also $H+Y \in \Ca$.
Define $\mfa_k^+=\set{X\in \Ca: \alpha_k(X)> \zeta \norm{X}}$, then $H=H+Y-Y\in \mfa_k^+ +\mfZ$ and we have just shown that 
\begin{equation}\label{cover}
    \Ca \subset \bigcup_{k=1}^r (\mfa^+_k+\mfZ)\cup \mfZ.
\end{equation}

We choose next a basis $\set{\bar H_1,\dots, \bar H_r}$ of $\mfa/\mfZ$ dual to $\set{\bar\alpha_1, \dots, \bar\alpha_r}$ and pick representatives ${H_k\in \mfa}$ of $\bar H_k$ such that $\alpha_k(H_k) > \zeta \norm{H_k}$. This ensures in particular that $\alpha_j(H_k)=0$ if $j\ne k$, and $H_k\in \mfa_k^+\subset \Ca$.


\textbf{Step 3.}
For any $H\in \mfZ,\ \alpha\in \Sigma^+$ and $Y\in \mfg_\alpha$, we get $\alpha(H)Y=\Bracket{H,Y}=0$. 
It implies $\rho(H)=0$ and $\delta(a)=\e^{-2\rho(H)}=1$.
With the fact $\norm{\varphi_\lambda}_\infty = 1$ we have
\begin{equation}\label{on_the_center}
\sup_{a\in \exp \mfZ}\int_{\mfa^*}R(\lambda)\abso{\varphi_\lambda(a)}\e^{\rho(\log a)} \abso{\HCc(\lambda)}^{-1}d \lambda 
\le \int_{\mfa^*}R(\lambda)\bracket{1+\abso{\lambda}}^{\frac{n-l}{2}} d\lambda .
\end{equation}

Next, estimates on $\mfa^+_k+\mfZ$ are the same as on $\mfa^+_k$, which is verified as follows.
By \cite[Chapter IV, Lemma 4.4]{helgason2022groups} we have for any $Y\in \mfZ$ and $H\in \Ca$, 
\begin{equation*}
    \begin{split}
        \varphi_\lambda(\exp H)
		&=\varphi_\lambda(\exp(-Y+Y+H))\\
		&=\int_K \e^{(-i\lambda+\rho)(A(k\exp Y))}\e^{(i\lambda+\rho)(A(k\exp(Y+H)))}dk\\
		&=\int_K \e^{(-i\lambda+\rho)(A(\exp Yk))}\e^{(i\lambda+\rho)(A(k\exp(Y+H)))}dk\\
		&=\int_K \e^{(-i\lambda+\rho)(Y)}\e^{(i\lambda+\rho)(A(k\exp(Y+H)))}dk\\
		&=\e^{-i\lambda(Y)}\int_K \e^{(i\lambda+\rho)(A(k\exp(Y+H)))}dk\\
        & = \e^{-i\lambda(Y)} \varphi_\lambda(\exp(Y+H)).
    \end{split}
\end{equation*}
Thus we obtain 
\begin{equation*}
    \abso{\varphi_\lambda(\exp(H+Y))}\e^{\rho(H+Y)}
    =\abso{\varphi_\lambda(\exp H)}\e^{\rho(H)},
\end{equation*}
that is,
\begin{equation}\label{relation on the quotient chamber}
\sup_{H\in \mfa^+_k+\mfZ} |\varphi_\lambda(\exp H)|\e^{\rho(H)}
 = \sup_{H\in \mfa^+_k} |\varphi_\lambda(\exp H)|\e^{\rho(H)}.
\end{equation}

\textbf{Step 4.}
For each $k\in \set{1,\dots,r}$, set $F_k = \{ \alpha_j: j\ne k\}$. This is exactly the set of simple roots of $G$ vanishing on $H_k$. Let $P_k=P_{F_k}$ be the parabolic subgroup associated with $F_k$, as described in Section \ref{Asymptotic behavior}. Denote by $G_k=M_{1F_k}$ the associated subgroup of $P_k$. The simple system on the Lie algebra $\mfg_k$ of $G_k$ is exactly $F_k$ and trivially $\abso{F_k}=r-1$. We write for convenience $\Sigma_k^+:=\Sigma_{F_k}^+$, $\Sigma_k:= \Sigma_{F_k}$, $\mfw_k:=\mfw_{F_k}$, $\HCc_k:=\HCc_{F_k}$, $\tau_k:=\tau_{F_k}$ and $\rho_k:= \rho_{F_k}= \frac{1}{2} \sum_{\alpha\in \Sigma_k^+} m_\alpha \alpha$. Then the conical region~\eqref{conical-region} corresponding to $H_k$ is
\[
A^+(H_k:\zeta)=\set{a\in \CA: \tau_k(\log a)> \zeta |\log a|}=\set{a\in \CA: \alpha_k(\log a) > \zeta |\log a|}.
\]
The sets $(\mfa_k^+)$ can be represented now in the following form: 
\[
\mfa_k^+ =\set{\log x: x\in A^+(H_k: \zeta)}.
\]
    
According to Notation \ref{thm586} and to Theorem \ref{thm593}, for every $k$ there exist constants $C_k(\zeta)$, $\iota_k$, $\kappa_k$ such that for any $a\in A^+(H_k: \zeta)$
\begin{equation*}
    \begin{split}
		\abso{\varphi_\lambda(a)}\e^{\rho(\log a)}
		&\le \e^{\rho_k(\log a)}\abso{\psi_\lambda(a)}+C_k(\zeta) \bracket{1+\abso{\lambda}}^{\iota_k} \e^{-\kappa_k \abso{\log a}}\\
		&= \abso{\abso{\mfw_k}^{-1} \sum_{s\in \mfw}\frac{\HCc(s\lambda)}{\HCc_k(s\lambda)} \theta_{s\lambda}(a)\e^{\rho_k(\log a)}}+C_k(\zeta)\bracket{1+\abso{\lambda}}^{\iota_k} \e^{-\kappa_k \abso{\log a}};
    \end{split}
\end{equation*}
by Corollary \ref{choice-of-iota}, we can assume $\iota_k = 6d+1$. 
Note also that $\zeta$ is the same for all $k$.
We denote then
\begin{align*}
  I_k'(a) &= \int_{\mfa^*}R(\lambda)\abso{\abso{\mfw_k}^{-1} \sum_{s\in \mfw} \frac{\HCc(s\lambda)}{\HCc_k(s\lambda)}\theta_{s\lambda}(a)\e^{\rho_k(\log a)} } \abso{\HCc(\lambda)}^{-1}d \lambda,
  \\
  I_k''(a) &= C_k(\zeta) \int_{\mfa^*}R(\lambda)\bracket{1+\abso{\lambda}}^{\iota_k} \e^{-\kappa_k |\log a|} \abso{\HCc(\lambda)}^{-1}d \lambda
\end{align*}
and
$$
		I_k(a) = I_k(a)'+ I_k(a)''.
$$

To estimate the main part $I_k(a)'$, recall first that $|\HCc(s\lambda)|=|\HCc(\lambda)|$ for each $s\in \mfw$, so that
\begin{align*}
I_k'(a)
		&\le \int_{\mfa^*}R(\lambda)\abso{\mfw_k}^{-1} \sum_{s\in \mfw} \frac{\abso{\HCc(s\lambda)}}{\abso{\HCc_k(s\lambda)}}\abso{\theta_{s\lambda}(a)}\e^{\rho_k(\log a)}\abso{\HCc(\lambda)}^{-1} d\lambda \\
		&=  \int_{\mfa^*}R(\lambda)\abso{\mfw_k}^{-1} \sum_{s\in \mfw} \abso{\HCc_k(s\lambda)}^{-1}\abso{\theta_{s\lambda}(a)}\e^{\rho_k(\log a)} d\lambda.  
\end{align*}
Next, every $s\in \mfw$ is an orthogonal transformation of $\mfa$, which implies
$R(\lambda)=R(\abso{\lambda})=R(\abso{s\lambda})=R(s\lambda)$. 
Changing the variables, we can now estimate the integral $I_k'(a)$ as follows:
\begin{equation*}
    \begin{split}
		I_k'(a)
		&\le |\mfw_k|^{-1} \sum_{s\in \mfw}\int_{\mfa^*}R(s\lambda)  \abso{\HCc_k(s\lambda)}^{-1}\abso{\theta_{s\lambda}(a)}\e^{\rho_k(\log a)} d\lambda \\
		&= \frac1{|\mfw_k|} \sum_{s\in \mfw} \e^{\rho_k(\log a)} \int_{\mfa^*}R(\lambda) \abso{\HCc_k(\lambda)}^{-1}\abso{\theta_{\lambda}(a)} d\lambda \\
		&= \frac{|\mfw|}{|\mfw_k|} \,\e^{\rho_k(\log a)} \int_{\mfa^*}R(\lambda)\abso{\theta_{\lambda}(a)} \abso{\HCc_k(\lambda)}^{-1}d\lambda.
    \end{split}
\end{equation*}
The simple roots of $G_k$ with respect to $\mfa$ are $\{\alpha_j: j\ne k\}$ and there are $r-1$ of them. The number $d_k$ of indivisible roots is smaller than $d$. We can apply thus the inductive hypothesis to get
    \begin{equation*}
        \begin{split}
            \sup_{a\in \CA} I_k'(a)
            \le C_k\int_{\mfa^*}R(\lambda)\bracket{1+\abso{\lambda}}^{6d_k+1+\frac{n_k-l}{2}} d\lambda
        \end{split}
    \end{equation*}
and
$$
\limsup_{|\log a|\to\infty}  I_k'(a) 
\le C_k\int_{\mfa^*}R(\lambda)\bracket{1+\abso{\lambda}}^{\frac{n_k-l}2} d\lambda,
$$
with a constant $C_k$ depending on the group $G_k$ only.
The remainder term $I_k''(a)$ is bounded by
$$
I_k''(a) \le C_k(\zeta)c_{G_k} \int_{\mfa^*}R(\lambda)(1+|\lambda|)^{6d+1+(n-l)/2} d \lambda\; \e^{-\kappa_k |\log a|}
; 
$$
it contributes to the uniform norm but not to the supper limit at $|\log a|\to\infty$.

    Thus, keeping the largest powers only, 
    \begin{equation*}
        \begin{split}
            \sup_{a\in A^+(H_k: \zeta)} I_k(a)
            &\le C_k' \int_{\mfa^*}R(\lambda)\bracket{1+|\lambda|}^{6d+\frac{n-l}{2}+1} d\lambda
        \end{split}
    \end{equation*}
and
$$
\limsup_{a\in A^+(H_k: \zeta) \atop |\log a|\to\infty}  I_k(a) 
\le C_k'\int_{\mfa^*}R(\lambda)\bracket{1+\abso{\lambda}}^{\frac{n-l}2} d\lambda
$$
with a constant $C_k'\ge1$.

With \eqref{cover}, it remains now to collect \eqref{on_the_center}, \eqref{relation on the quotient chamber} and the inequality above to get 
the statement of the theorem, with $C=\max\{ C'_k: 1\le k\le r\}$.
\end{proof}

From this moment we suppose that $G$ is a semisimple connected Lie group with finite center and $F$ a Borel function on $\R^+$.
We proceed 
to obtain uniform estimates for the kernel of $F(L)$, and by consequence pointwise bounds for the kernel of $F(\Delta_\rho)$. The following lemma shows that it is sufficient to estimate the values on the subset $\CA\subset G$ only.

\begin{lemma}\label{A+}
Let $k_\La$ be defined as in Section \ref{La-LB}, then
    \[
    \norm{k_\La}_\infty=\sup_{x\in G}\abso{k_\La(x)}\le \sup_{a\in \CA}\abso{k_\La(a)}.
    \]
\end{lemma}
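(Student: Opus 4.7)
The plan is to combine the $K$-biinvariance of $k_\LB$ with the identity \eqref{relation of kernels} and a standard comparison between the Iwasawa and polar projections. Given $x\in G$ with polar decomposition $x = k_1 \exp(x^+) k_2$ where $x^+\in\Ca$, note that $k_\LB$ is $K$-biinvariant, as it is given by the spherical integral
$$
k_\LB(x) = C\int_{\mfa^*} F(|\lambda|^2)\varphi_\lambda(x) |\HCc(\lambda)|^{-2}\, d\lambda
$$
recalled at the end of Section \ref{La-LB} and the functions $\varphi_\lambda$ are $K$-biinvariant. Consequently $k_\LB(x) = k_\LB(\exp x^+)$. Using \eqref{relation of kernels} together with $\delta(g) = e^{-2\rho(H(g))}$ and the observation $H(\exp x^+) = x^+$, this yields
$$
|k_\La(x)| = e^{\rho(H(x))} |k_\LB(\exp x^+)| = e^{\rho(H(x)) - \rho(x^+)}\, |k_\La(\exp x^+)|.
$$
Since $\exp(x^+)\in\CA$, the lemma reduces to proving the pointwise inequality
$$
\rho(H(x)) \le \rho(x^+), \qquad x\in G.
$$

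For this last bound I would invoke Kostant's convexity theorem, which states that $H(x)$ lies in the convex hull of the Weyl orbit $\{s\cdot x^+ : s\in\mfw\}$. It is therefore enough to verify $\rho(s\cdot x^+) \le \rho(x^+)$ for every $s\in\mfw$. This holds because $x^+ - s\cdot x^+$ is a non-negative integral combination of simple coroots whenever $x^+\in\Ca$, and $\rho$ is dominant, hence non-negative on such combinations; averaging over the convex representation of $H(x)$ gives the desired inequality.

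The only genuinely non-trivial input is Kostant's convexity theorem; once it (or the weaker direct comparison $\rho\circ H \le \rho\circ(\cdot)^+$, available in Helgason \cite{helgason2022groups}) is granted, the rest is the one-line manipulation with the modular function above. A self-contained alternative would be to argue via the integral formula $\varphi_0(\exp H) = \int_K e^{-\rho(H(k\exp H))}\,dk$ combined with a Jensen-type estimate, but the convexity-theorem route is cleaner and is what I would adopt.
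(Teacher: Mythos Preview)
Your argument is correct and follows the same route as the paper: write $x=k_1 a k_2$ with $a=\exp(x^+)\in\CA$, use $K$-biinvariance of $k_\LB$ and the relation $k_\La=\delta^{-1/2}k_\LB$ to reduce to the inequality $\rho(H(ak_2))\le\rho(\log a)$. The paper simply cites this inequality directly (as $|\rho(H(ak))|\le\rho(\log a)$ for $a\in\CA$, $k\in K$) from \cite[Corollary 3.5.3]{gangolli1988harmonic} or \cite[Ch.~IV, Lemma 6.5]{helgason2022groups}, whereas you derive it from the stronger Kostant convexity theorem together with the dominance of $\rho$; you even note this alternative citation yourself. One small slip: $x^+-s\cdot x^+$ is a non-negative \emph{real} (not integral) combination of simple coroots, but this does not affect the argument.
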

\begin{proof}
	Every $x\in G$ can be decomposed as $x=k_1 a k_2$ where $a\in \CA$ and $k_1,\ k_2\in K$. 
	In virtue of \eqref{relation of kernels} and knowing that $k_\LB$ is $K$-biinvariant, we obtain
	\begin{equation*}
		\begin{split}
			k_\La(x)
			&=\delta^{-1/2}(k_1 ak_2)k_\LB(k_1ak_2)\\
			&=\delta^{-1/2}(ak_2)k_\LB(a)\\
			&=\delta^{-1/2}(ak_2)\delta^{1/2}(a)\delta^{-1/2}(a)k_\LB(a)\\
			&=\e ^{\rho(H(ak_2)-\log a)}k_\La(a).
		\end{split}
	\end{equation*}
	By \cite[Corollary 3.5.3]{gangolli1988harmonic} (see also \cite[Chapter 4, Lemma 6.5]{helgason2022groups}) we know that
	\[
	\abso{\rho(H(ak))}\le \rho(\log a)\quad \text{for all}\ a\in \CA,\ k\in K.
	\]
	Then we get 
	\[
	\e ^{\rho(H(ak_2)-\log a)}\le 1
	\] 
	and 
	\[
	\sup_{x\in G}\abso{k_\La(x)}\le \sup_{a\in \CA}\abso{k_\La(a)}.
	\]
	This completes the proof.
\end{proof}

Recall that $x^+$ denotes the $\CA$ component in the polar decomposition of $x\in G$.
\begin{theorem}\label{uniform estimate}
    Let $G$ be a semisimple connected Lie group with finite center and $F$ a Borel function on $\R^+$ such that 
    \begin{equation}\label{I_F}
I_F =    \int_{\mfa^*} \abso{F(\abso{\lambda}^2)} (1+|\lambda|)^{6d+n-l+1} d\lambda < \infty.
    \end{equation}
    Then $k_\La$ is uniformly bounded on $G$, and 
    \[
    \norm{k_\La}_\infty 
    =\sup_{x\in G}\abso{ \delta^{-\frac{1}{2}}(x)\int_{\mfa^*} F(\abso{\lambda}^2) \abso{\HCc (\lambda)}^{-2} \varphi_\lambda(x) d\lambda} 
    \le C I_F,
    \]
    where $C$ is a constant depending on $G$ only.
    Moreover,
    $$
    \limsup_{|\log x^+|\to\infty} |k_\La(x)| \le C \int_{\mfa^*} |F(|\lambda|^2)| (1+|\lambda|)^{n-l} d\lambda.
    $$
\end{theorem}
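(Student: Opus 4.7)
The plan is to combine the Harish-Chandra inversion formula, the kernel identity $k_{F(L)} = \delta^{-1/2} k_{F(\Delta_\rho)}$, the reduction Lemma \ref{A+}, and the main technical Lemma \ref{main lemma}. The proof will be essentially assembly, since the hard work has already been invested in Lemma \ref{main lemma} and Corollary \ref{choice-of-iota}.

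First, I would use the spherical inversion formula recalled at the end of Section \ref{La-LB} to write
\[
k_{F(\Delta_\rho)}(x) = C \int_{\mfa^*} F(|\lambda|^2)\, \varphi_\lambda(x)\, |\HCc(\lambda)|^{-2}\, d\lambda,
\]
and then apply the relation \eqref{relation of kernels} to get
\[
k_{F(L)}(x) = C\,\delta^{-1/2}(x) \int_{\mfa^*} F(|\lambda|^2)\, \varphi_\lambda(x)\, |\HCc(\lambda)|^{-2}\, d\lambda.
\]
By Lemma \ref{A+}, it suffices to bound $|k_{F(L)}(a)|$ for $a\in \CA$, where $\delta^{-1/2}(a) = \e^{\rho(\log a)}$. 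Thus
\[
|k_{F(L)}(a)| \le C\,\e^{\rho(\log a)} \int_{\mfa^*} |F(|\lambda|^2)|\, |\varphi_\lambda(a)|\, |\HCc(\lambda)|^{-2}\, d\lambda.
\]

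Next, I set $R(\lambda) = |F(|\lambda|^2)|\,|\HCc(\lambda)|^{-1}$, which is radial on $\mfa^*$ by \eqref{c-conjugate}. Then the previous integral is exactly
\[
\e^{\rho(\log a)} \int_{\mfa^*} R(\lambda)\, |\varphi_\lambda(a)|\, |\HCc(\lambda)|^{-1}\, d\lambda,
\]
to which Lemma \ref{main lemma} applies and yields the uniform bound by
\[
C \int_{\mfa^*} R(\lambda)\,(1+|\lambda|)^{6d + (n-l)/2 + 1}\, d\lambda.
\]
Substituting back $R$ and inserting the estimate \eqref{c-function} on $|\HCc(\lambda)|^{-1}$, I obtain
\[
|k_{F(L)}(a)| \le C \int_{\mfa^*} |F(|\lambda|^2)|\,(1+|\lambda|)^{6d + (n-l) + 1}\, d\lambda = C\, I_F,
\]
which gives the first claim through Lemma \ref{A+}.

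For the $\limsup$ statement, I would repeat the same chain using the second conclusion of Lemma \ref{main lemma}, replacing the power $6d + (n-l)/2 + 1$ by $(n-l)/2$, and again invoking \eqref{c-function} to convert the $|\HCc|^{-1}$ factor inside $R$ into a factor $(1+|\lambda|)^{(n-l)/2}$, yielding the bound by $C \int |F(|\lambda|^2)|(1+|\lambda|)^{n-l}\,d\lambda$. The only subtlety is that $|\log x^+|\to\infty$ in the polar decomposition corresponds, via $x = k_1 a k_2$ with $a = \exp(x^+)\in \CA$, to $|\log a|\to\infty$ in $\CA$; the proof of Lemma \ref{A+} shows that $|k_{F(L)}(x)| \le |k_{F(L)}(a)|$, so the $\limsup$ on $G$ is controlled by the $\limsup$ on $\CA$. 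No real obstacle arises; the whole argument is a clean deduction from Lemma \ref{main lemma}, with the only choice being the correct bookkeeping of the exponent $6d + (n-l) + 1$ arising from the combination of Corollary \ref{choice-of-iota} and the pointwise bound on $|\HCc(\lambda)|^{-1}$.
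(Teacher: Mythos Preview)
Your argument is essentially the same as the paper's and is correct in spirit, but there is one slip in your choice of $R$. You set $R(\lambda)=|F(|\lambda|^2)|\,|\HCc(\lambda)|^{-1}$ and claim it is radial by \eqref{c-conjugate}. That identity only gives $|\HCc(s\lambda)|=|\HCc(\lambda)|$ for $s\in\mfw$, i.e.\ $\mfw$-invariance, not dependence on $|\lambda|$ alone; in rank $\ge 2$ the function $|\HCc(\lambda)|^{-1}$ is not radial. Lemma~\ref{main lemma} is stated for a \emph{radial} $R$, and its proof uses exactly $R(\lambda)=R(s\lambda)$, so your $R$ does satisfy what is actually needed; but as written you have not applied the lemma as stated.

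The paper avoids this by first invoking \eqref{c-function} to bound $|\HCc(\lambda)|^{-1}\le C(1+|\lambda|)^{(n-l)/2}$ and then taking $R(\lambda)=|F(|\lambda|^2)|(1+|\lambda|)^{(n-l)/2}$, which is genuinely radial. This leads to the same exponent $6d+(n-l)+1$ after applying Lemma~\ref{main lemma}. Either fix (using the paper's $R$, or noting that Lemma~\ref{main lemma} only needs $\mfw$-invariance) repairs your argument immediately; otherwise your reduction via Lemma~\ref{A+} and the $\limsup$ part match the paper exactly.
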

\begin{proof}
    By Lemma \ref{A+}, it is sufficient to estimate $|k_\La(a)|$
for $a\in \CA$.
    Since $G$ is semisimple it is also in class $\mathcal{H}$; we take 
    \[
    R(\lambda)=\abso{F(\abso{\lambda}^2)}\bracket{1+\abso{\lambda}}^{\frac{n-l}{2}}
    \]
    in Lemma \ref{main lemma} to obtain, using \eqref{c-function}, that
	\begin{equation*}
		\begin{split}
        \sup_{a\in \CA}\abso{k_\La(a)}
		&\le \sup_{a\in \CA}\int_{\mfa^*}\abso{F(\abso{\lambda}^2)}\e^{\rho(\log a)} \abso{\varphi_\lambda(a)}\abso{\HCc(\lambda)}^{-2} d\lambda\\
		&\le \sup_{a\in \CA}\int_{\mfa^*} R(\lambda) \,\e^{\rho(\log a)} \abso{\varphi_\lambda(a)}\abso{\HCc(\lambda)}^{-1} d\lambda\\
		&\le C\int_{\mfa^*} R(\lambda) (1+|\lambda|)^{6d+1+\frac{n-l}{2}}d \lambda\\
		&= C\int_{\mfa^*} |F(|\lambda|^2)| \bracket{1+|\lambda|}^{6d+n-l+1}d \lambda.
		\end{split}
	\end{equation*}
	The second inequality is obtained similarly.
\end{proof}

\begin{remark}
In the statement of the theorem, one can clearly replace \eqref{I_F} by    
    \begin{equation}\label{I_F'}
I_F =    \int_{\mfa^*} \abso{F(\abso{\lambda}^2)} (1+|\lambda|)^{7(n-l)+1} d\lambda < \infty.
    \end{equation}
\end{remark}

\begin{corollary}\label{main-LB}
    Let $G$ be a semisimple connected Lie group with finite center and $F$ a Borel function on $\R^+$ verifying \eqref{I_F} or \eqref{I_F'}.
    Then  for every $x\in G$
    \[ 
|k_\LB(x)|  \le CI_F\, e^{ -\rho( H(x) ) }, 
    \]
    where $C$ is a constant depending on $G$ only.
    Moreover,
    $$
    \limsup_{|\log x^+|\to\infty} |k_\LB(x)| e^{\rho( H(x) ) } \le C \int_{\mfa^*} |F(|\lambda|^2)| (1+|\lambda|)^{n-l} d\lambda.
    $$    
\end{corollary}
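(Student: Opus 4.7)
The plan is to derive this corollary directly from Theorem \ref{uniform estimate} by simply converting the uniform bound on $k_\La$ into a pointwise bound on $k_\LB$ via the relation \eqref{relation of kernels}.

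Recall from Section \ref{La-LB} that $k_\LB = \delta^{1/2} k_\La$, where $\delta$ is the modular function extended to $G$ by $\delta(g) = e^{-2\rho(H(g))}$. Therefore $\delta^{1/2}(g) = e^{-\rho(H(g))}$, and we immediately have the pointwise identity
\[
|k_\LB(x)| = e^{-\rho(H(x))}\, |k_\La(x)| \qquad (x\in G).
\]
Substituting the uniform bound $\norm{k_\La}_\infty \le C I_F$ from Theorem \ref{uniform estimate} into this identity yields the first asserted inequality $|k_\LB(x)| \le C I_F\, e^{-\rho(H(x))}$, with the same constant $C$ depending only on $G$.

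For the second assertion, I multiply the same identity by $e^{\rho(H(x))}$ to obtain $|k_\LB(x)|\, e^{\rho(H(x))} = |k_\La(x)|$, and then apply the limsup statement from Theorem \ref{uniform estimate}, which bounds $\limsup_{|\log x^+|\to\infty} |k_\La(x)|$ by $C \int_{\mfa^*} |F(|\lambda|^2)| (1+|\lambda|)^{n-l}\, d\lambda$. This immediately gives the desired bound on $\limsup_{|\log x^+|\to\infty} |k_\LB(x)|\, e^{\rho(H(x))}$.

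There is no real obstacle here, since the corollary is purely a translation between the two operators $L$ and $\Delta_\rho$ via the intertwining relation $L = \delta^{-1/2}\tau \Delta_\rho \tau \delta^{1/2}$. The only point requiring mild care is to note that the formulation of Theorem \ref{uniform estimate} uses the polar component $x^+\in\CA$, while the corollary uses the Iwasawa radial part $H(x)$; but \eqref{relation of kernels} is a pointwise identity on all of $G$ using $\delta^{1/2}(x)=e^{-\rho(H(x))}$, so no further argument is needed to pass between these two descriptions.
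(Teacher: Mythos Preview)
Your proposal is correct and matches the paper's intended argument exactly: the corollary is stated without proof precisely because it follows by multiplying the bounds of Theorem~\ref{uniform estimate} by $\delta^{1/2}(x)=e^{-\rho(H(x))}$ via \eqref{relation of kernels}.
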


\begin{remark}
It follows from Theorem \ref{uniform estimate} that the operator $F(L)$ is bounded from $L^1(G,m_r)$, with respect to the {\it right} Haar measure, to $L^\infty(G)$. This occurs frequently, as soon as the integral \eqref{I_F} converges,  without any smoothness or positivity assumptions on the symbol.

One can also note that from $L^1(G,m_l)$ to $L^\infty(G)$ these operators are {\it never} bounded, unless $F$ is zero (this is a general fact about left convolution operators). 
\end{remark}

\begin{remark}  
From the known estimate \cite[Proposition 4.6.1, Theorem 4.6.4]{gangolli1988harmonic} 
of spherical functions
$$
|\varphi_\lambda(e^H)| \le C (1+\|H\|)^d e^{-\rho(H)},\ \text{for all}\ H\in \CA
$$
it is easy to obtain estimates for the kernel of the type $|k_{F(L)}(e^H)| \lesssim (1+\|H\|)^d$ and $|k_{F(\Delta_\rho)}(e^H)| \lesssim (1+\|H\|)^d e^{-\rho(H)}$. Our result is that it is possible to remove this polynomial factor.
In the analysis of the Laplace-Beltrami operator $\Delta$, a factor of this kind is often non-significant. But it becomes essential in concern with the operator $L$.
\end{remark}

\section{Lower bounds for oscillating functions in rank one}\label{lower_bounds}

Estimates of Section 4 involve the absolute value of the function $F$ and may seem unsatisfactory for oscillating functions of the type $F(x) = \exp(it\sqrt x) \,\psi(\sqrt x)$, where one would hope to see a decrease in $t$ as~$t\to\infty$. However, they are optimal even in this case, if $G$ has rank one. We show this in the present section. In higher rank the behaviour would be different, but it will be considered elsewhere.

Let us assume now that $G$ is a semisimple Lie group of real rank one.
Both $\mfa$ and $\mfa^*$ can be viewed then as the real line $\R$ and $\abso{\Sigma_s^+}=1$. Let $\alpha$ be the only positive root; we can assume that it acts at $H\in \mfa\simeq\R$ as simple multiplication, $\alpha(H) = \alpha H$.
For all $H\in \R^+$ and all regular $\lambda\in \R$, we have the Harish-Chandra decomposition \eqref{phi-lambda-rank-one} of $\varphi_\lambda(\exp H)\e^{\rho(H)}$.
By \cite[Theorem 4.5.4]{gangolli1988harmonic}, there exist constants $C_\alpha>0$, $p>0$ such that $|\Gamma_{m\alpha}(\lambda)| \le C_\alpha m^p$ for all $\lambda\in\mfa^*$. For $H$ such that $\alpha(H)>1$, we can then estimate
\begin{equation*}
    \begin{split}
         & \abso{\varphi_\lambda(\exp H)\e^{\rho(H)}- \HCc (\lambda)\e^{i\lambda(H)}-\HCc (s_\alpha\lambda)\e^{is_\alpha\lambda(H)}}\\
        &\quad \le C_\alpha \big( |\HCc(\lambda)| + |\HCc(s_\alpha \lambda)| \big) \sum_{m=1}^\infty m^p \e^{-m\alpha(H)}
        < C |\HCc(\lambda)| e^{-\alpha(H)}.
    \end{split}
\end{equation*}
Since $s_\alpha\lambda= -\lambda$ for all $\lambda\in \mfa^*$, we have
\[
    \begin{split}
        \abso{k_\La(\exp H)}
        &= \abso{\int_{\R} F(\abso{\lambda}^2) \varphi_\lambda(\exp H)\e^{\rho(H)} \abso{\HCc (\lambda)}^{-2}d \lambda}\\
        &\ge \abso{\int_{\R} F(\abso{\lambda}^2)\abso{\HCc (\lambda)}^{-2} \bracket{ \HCc (\lambda)\e^{i\lambda(H)} + \HCc (-\lambda)\e^{-i\lambda(H)}} d \lambda}\\ 
        &\quad - C \int_{\R} \abso{F(\abso{\lambda}^2)} \e^{-\alpha(H)} |\HCc (\lambda)|^{-1} d \lambda.
    \end{split}
\]

Now we set $F(x)=\e^{it\sqrt{x}}\psi(\sqrt{x})$ where $t>0$ and $\psi$ is a continuous function on $\R$ such that 
\begin{equation}\label{wavecondition}
    \begin{split}
        J_\psi = \int_0^\infty |\psi(x)| (1+x)^{n+6} dx < \infty.
    \end{split}
\end{equation}
This corresponds to \eqref{I_F} in rank one when $d=l=1$.
Denote by $k_t$ the kernel of the operator $F(L)$, then under the assumptions above on $H$ we have $k_t(\exp H)\ge \abso{I_1(t,H)}-I_2(H)$ where
\[
I_1(t, H)= \int_{\R} \e^{it\abso{\lambda}}\psi(\abso{\lambda}) \abso{\HCc (\lambda)}^{-2} \bracket{ \HCc (\lambda)\e^{i\lambda(H)} + \HCc (-\lambda)\e^{-i\lambda(H)}} d \lambda, 
\]
\[
I_2(H) = C \e^{-\alpha(H)} \int_{\R} \abso{\psi(\abso{\lambda})} |\HCc (\lambda)|^{-1} d \lambda.
\]
For the asymptotics of $k_t$ with large $H$, we only need to consider the $I_1(t,H)$, as $I_2(H)$ vanishes as $H\rightarrow +\infty $ under the condition (\ref{wavecondition}). Let us write $H=t+a$ with $a\in \R$. Using the conjugation property \eqref{c-conjugate} of the $\HCc$-function, we transform $I_1$ as follows:
\begin{align*}\label{int_psi_c}
        \frac12\, I_1(t,t+a)
        &=\int_{\R} \e^{it\abso{\lambda}}\psi(\abso{\lambda})\abso{\HCc (\lambda)}^{-2} \real \bracket{ \HCc (\lambda)\e^{i\lambda (t+a)}} d \lambda \\
        &=\int_0^\infty \e^{itx}\psi(x)\abso{\HCc(x)}^{-2} \real \bracket{\HCc(x)\e^{i(t+a)x}+\HCc(-x)\e^{-i(t+a)x} } d x\\
        &= 2\int_0^\infty \psi(x)\abso{\HCc(x)}^{-2} \e^{itx}\real \bracket{\HCc(x)\e^{i(t+a)x}} d x.
\end{align*}
After elementary calculations this takes form
\begin{align*}
        &= \int_0^\infty \psi(x)\abso{\HCc(x)}^{-2} \Big[ \real \HCc(x) \Big( (1+\cos(2tx)) \cos(ax) - \sin(2tx) \sin(ax)\\
        &\quad + i\sin(2tx) \cos(ax) -i (1-\cos(2tx)) \sin(ax) \Big)\\
        &\quad - \imaginary \HCc(x) \Big( \sin(2tx) \cos(ax) + (1+\cos(2tx)) \sin(ax) \\
        &\quad + i (1-\cos(2tx)) \cos(ax) + i \sin(2tx) \sin(ax) \Big) \Big] d x,
\end{align*}
which tends at $t\to+\infty$ to
\begin{align*}
        & \int_0^\infty \psi(x)\abso{\HCc(x)}^{-2} \Big[ \Big( \real \HCc(x) \cos(ax) - \imaginary \HCc(x) \sin(ax) \Big)\\
        &\quad -i \Big( \real \HCc(x) \sin(ax) + \imaginary \HCc(x) \cos(ax) \Big) \Big] d x\\
        &= \int_0^\infty \psi(x)\abso{\HCc(x)}^{-2} \Big[ \real \big( \HCc(x) e^{iax} \big) - i\imaginary \big( \HCc(x) e^{iax} \big) \Big] d x\\
        &=\int_0^\infty \psi(x)\abso{\HCc(x)}^{-2}\overline{\HCc(x) } e^{-iax} d x\\
        &=\int_0^\infty \psi(x)\HCc(x)^{-1} e^{-iax} d x .
\end{align*}
This is the Fourier transform at $a$ of the function $\tilde\psi$ which is equal to $\psi(x)\HCc(x)^{-1}$ if $x\ge0$ and 0 otherwise. Note that $\tilde\psi\in L^1$ since $\psi$ satisfies (\ref{wavecondition}).
For $\psi\not\equiv0$, this Fourier transform $\mF(\tilde\psi)$ is a nonzero continuous function vanishing at infinity, so its norm $\nu=\norm{\mF(\tilde\psi)}_\infty>0$ is attained at a point $a_0\in\R$. 
For $t\ge \alpha^{-1}-a_0$, we have $\alpha(H)=\alpha(a_0+t)\ge 1$, and the estimate above is applicable. Altogether, this implies that 
\[
\liminf_{t\rightarrow +\infty}\|k_t\|_\infty\ge \lim_{t\rightarrow +\infty}\abso{k_t(\exp (t+a_0))} \ge \lim_{t\rightarrow +\infty}(\abso{I_1(t,t+a_0)}-I_2(t+a_0))=\nu >0.
\]
In particular, this applies to the most common localization function $\psi(x)=\bracket{1+x^2}^{-\kappa}$ where $\kappa$ is a constant such that (\ref{wavecondition}) holds. 
To summarize the above, we proved the following theorem:
\begin{theorem}\label{lower-rank1}
    Let $G$ be a semisimple Lie group of real rank one. Denote by $k_t$ the kernel of $\e^{it\sqrt{L}}\psi(\sqrt{L})$ where $t>0$ and $\psi$ is a continuous function on $\R$ satisfying \eqref{wavecondition}.
    Then $\|k_t\|_\infty\le CJ_\psi$ for all $t$, and $\lim\limits_{t\rightarrow +\infty}\norm{k_t}_\infty \ge \nu$ where $\nu=\norm{\mF(\tilde\psi)}_\infty>0$ and
    \[
    \tilde\psi(x) = 
    \begin{cases}
    \psi(x)\HCc(x)^{-1} & \text{if}\ x\ge 0,\\
    0 & \text{if}\ x< 0.
    \end{cases}
    \]
    In particular, if $\psi(x)=\bracket{1+x^2}^{-\kappa}$, then the theorem applies if $\kappa>(n+7)/2$.
\end{theorem}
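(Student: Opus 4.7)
The upper bound is essentially immediate: since $|F(|\lambda|^2)| = |\psi(|\lambda|)|$, the integrability condition \eqref{wavecondition} implies \eqref{I_F} up to a change of variables to the half-line, so Theorem \ref{uniform estimate} gives $\|k_t\|_\infty \le C J_\psi$ uniformly in $t$. My attention therefore turns to the lower bound, which is the real content.

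The plan is to exploit the explicit Harish-Chandra series for $\varphi_\lambda$ in rank one, which reduces to a sum over $\mfw=\{\id,s_\alpha\}$. For $\alpha(H)>1$ (i.e.\ $H$ bounded away from the wall), I would use the coefficient bound $|\Gamma_{m\alpha}(\lambda)| \le C_\alpha m^p$ from \cite[Theorem 4.5.4]{gangolli1988harmonic} to isolate the two leading plane-wave terms and control the tail by a factor $C|\HCc(\lambda)|\,e^{-\alpha(H)}$. This produces a lower estimate
\[
|k_t(\exp H)| \;\ge\; |I_1(t,H)| - I_2(H),
\]
where $I_1$ is the integral of $\psi(|\lambda|)|\HCc(\lambda)|^{-2}\bigl(\HCc(\lambda)e^{i\lambda H}+\HCc(-\lambda)e^{-i\lambda H}\bigr)e^{it|\lambda|}$ and $I_2$ is a remainder bounded by $C e^{-\alpha(H)} \int |\psi(|\lambda|)|\,|\HCc(\lambda)|^{-1}\,d\lambda$, which is finite by \eqref{wavecondition} combined with \eqref{c-function}.

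The decisive step is to set $H=t+a$ with $a\in\R$ and to let $t\to+\infty$. Substitution produces terms with phases $e^{i(2t+a)x}$, $e^{i(2t-a)x}\cdot(-1)$, and so on, which separate into rapidly oscillating pieces (those carrying $e^{\pm 2itx}$) and stationary pieces depending only on $ax$. Using $\overline{\HCc(x)} = \HCc(-x)$, the non-oscillating part collapses to $\int_0^\infty \psi(x)\HCc(x)^{-1} e^{-iax}\,dx = \mF(\tilde\psi)(a)$. The rapidly oscillating pieces vanish as $t\to\infty$ by the Riemann--Lebesgue lemma, provided $\psi(x)|\HCc(x)|^{-2}\HCc(\pm x)\in L^1(\R_+)$; this follows from \eqref{wavecondition} together with the polynomial growth bound on $|\HCc(x)|^{-1}$, so $\tilde\psi\in L^1$.

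Finally, $\mF(\tilde\psi)$ is continuous and vanishes at infinity (and is nonzero whenever $\psi\not\equiv 0$ since $\HCc(x)^{-1}$ does not vanish on $\R_+$), hence attains its supremum norm $\nu>0$ at some $a_0\in\R$. Choosing $t$ large enough that $\alpha(t+a_0)>1$ and evaluating $k_t$ at $\exp(t+a_0)$ yields
\[
\liminf_{t\to+\infty} \|k_t\|_\infty \;\ge\; \lim_{t\to+\infty}\bigl(|I_1(t,t+a_0)|-I_2(t+a_0)\bigr) \;=\; \nu,
\]
the remainder vanishing by the exponential factor $e^{-\alpha(t+a_0)}$. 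The main technical obstacle is the careful trigonometric bookkeeping to identify which pieces of $I_1$ carry the factor $e^{\pm 2itx}$ and thus disappear in the limit; the rest is a clean application of Riemann--Lebesgue. The corollary for $\psi(x)=(1+x^2)^{-\kappa}$ is just a check that $\kappa>(n+7)/2$ makes \eqref{wavecondition} finite, since $6d+n-l+1=n+6$ when $d=l=1$.
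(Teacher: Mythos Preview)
Your proposal is correct and follows essentially the same route as the paper: isolate the two leading plane-wave terms in the rank-one Harish-Chandra series using the $|\Gamma_{m\alpha}(\lambda)|\le C_\alpha m^p$ bound, set $H=t+a$, split $I_1$ into pieces with phase $e^{\pm 2itx}$ (killed by Riemann--Lebesgue) and a stationary piece that collapses via $\overline{\HCc(x)}=\HCc(-x)$ to $\mF(\tilde\psi)(a)$, then evaluate at the maximizer $a_0$. The paper carries out the trigonometric bookkeeping you allude to explicitly, but the structure and all key ingredients are identical.
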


\addcontentsline{toc}{section}{References}

\bibliographystyle{amsplain}
\bibliography{pre-ref}

\end{document}